\definecolor{gr}{rgb}   {0.,   0.69,   0.23 }
\definecolor{bl}{rgb}   {0.,   0.5,   1. }
\definecolor{mg}{rgb}   {0.85,  0.,    0.85}
\definecolor{yl}{rgb}   {0.8,  0.7,   0.}
\definecolor{or}{rgb}  {0.7,0.2,0.2}
\newtheorem{theorem}{Theorem} [section]
\newtheorem{lemma}[theorem]{Lemma}
\newtheorem{proposition}[theorem]{Proposition}
\newtheorem{remark}[theorem]{Remark}
\newcommand{\noi}{\noindent}
\newcommand{\Z}{\mathbb{Z}}
\newcommand{\R}{\mathbb{R}}
\newcommand{\T}{\mathbb{T}}
\newcommand{\Gdl}{\mathcal{G}_{\dl} }
\newcommand{\Tdl}{\mathcal{T}_{\dl} }
\newcommand{\Qdl}{\mathcal{Q}_{\dl} }
\newcommand{\hi}{\textup{hi}}
\newcommand{\HI}{\textup{HI}}
\newcommand{\lo}{\textup{lo}}
\newcommand{\LO}{\textup{LO}}
\newcommand{\too}{\longrightarrow}
\newcommand{\BO}{\text{\rm BO} }
\let\P= \undefined
\newcommand{\P}{\mathbf{P}}
\newcommand{\Q}{\mathbf{Q}}
\newcommand{\F}{\mathcal{F}}
\newcommand{\dl}{\delta}
\newcommand{\eps}{\varepsilon}
\newcommand{\kk}{\kappa}
\newcommand{\G}{\Gamma}
\newcommand{\ld}{\lambda}
\newcommand{\ft}{\widehat}
\newcommand{\wt}{\widetilde}
\newcommand{\cj}{\overline}
\newcommand{\dx}{\partial_x}
\newcommand{\dt}{\partial_t}
\newcommand{\ta}{\theta}
\renewcommand{\l}{\ell}
\newcommand{\les}{\lesssim}
\newcommand{\ges}{\gtrsim}
\newcommand{\jb}[1]
{\langle #1 \rangle}
\newcommand{\ind}{\mathbf 1}
\renewcommand{\S}{\mathcal{S}}
\newcommand{\M}{\mathcal{M}}
\newcommand{\N}{\mathbb{N}}
\newcommand{\NN}{\mathcal{N}}
\newtheorem*{ackno}{Acknowledgements}
\renewcommand{\H}{\mathcal{H}}
\def\sgn{\textup{sgn}}
\newcommand{\Id}{\textup{Id}}
\numberwithin{equation}{section}
\numberwithin{theorem}{section}
\begin{document}
\baselineskip = 14pt

\title[Deep-water limit of ILW in  $L^2$]
{Deep-water limit of 
the intermediate long wave
equation in $L^2$}

\author[A.~Chapouto, G.~Li, T.~Oh, and D.~Pilod]
{Andreia Chapouto, Guopeng Li, Tadahiro Oh, and Didier Pilod}

\address{
Andreia Chapouto,  School of Mathematics\\
The University of Edinburgh\\
and The Maxwell Institute for the Mathematical Sciences\\
James Clerk Maxwell Building\\
The King's Buildings\\
Peter Guthrie Tait Road\\
Edinburgh\\ 
EH9 3FD\\
 United Kingdom}

\email{a.chapouto@ed.ac.uk}

\address{
Guopeng Li, School of Mathematics\\
The University of Edinburgh\\
and The Maxwell Institute for the Mathematical Sciences\\
James Clerk Maxwell Building\\
The King's Buildings\\
Peter Guthrie Tait Road\\
Edinburgh\\ 
EH9 3FD\\
 United Kingdom, 
 and
Department of Mathematics and Statistics, 
Beijing Institute of Technology, 
Beijing, China}

\email{guopeng.li@ed.ac.uk}

\address{
Tadahiro Oh, School of Mathematics\\
The University of Edinburgh\\
and The Maxwell Institute for the Mathematical Sciences\\
James Clerk Maxwell Building\\
The King's Buildings\\
Peter Guthrie Tait Road\\
Edinburgh\\ 
EH9 3FD\\
 United Kingdom}

\email{hiro.oh@ed.ac.uk}

\address{
Didier Pilod\\
Department of Mathematics, University of Bergen, Postbox 7800, 5020 Bergen, Norway}

\email{Didier.Pilod@uib.no}

\subjclass[2020]{35Q35, 76B55}

\keywords{intermediate long wave  equation; 
Benjamin-Ono equation;  deep-water limit}

\begin{abstract}
We study the well-posedness issue of
the intermediate long wave
equation (ILW)   on both the real line and  the circle.
By applying the gauge transform for the 
Benjamin-Ono equation (BO)
and
adapting the $L^2$ well-posedness argument for BO
by Molinet and the fourth author (2012), 
we prove global well-posedness of ILW
in $L^2$
on both the real line and the circle.
In the periodic setting, this provides
the first low regularity well-posedness of ILW.
We then establish convergence
of the ILW dynamics to the BO dynamics
in the deep-water limit
at the $L^2$-level.


\vspace{-10mm}

\end{abstract}

%
\maketitle

\tableofcontents

\baselineskip = 14pt

\section{Introduction}

\label{SEC:1}


In this paper, we study 
the  intermediate long wave equation (ILW):
\begin{align}
\begin{cases}
\dt u -
 \Gdl \partial_x^2 u  =  \dx (u^2)    \\ 
u|_{t = 0} = u_0,
\end{cases}
\qquad ( t, x) \in \R \times \M, 
\label{ILW1}
\end{align}

\noi
where
$\M =\R$ or $    \T = \R/(2\pi \Z)$.
The equation \eqref{ILW1},  introduced in \cite{JE, KKD}, 
 models 
the internal wave propagation of the interface 
in a stratified fluid of finite depth $\dl > 0$, 
and the unknown $u :\R\times \M \to \R$
denotes the amplitude of the internal wave at the interface.
See also Remark~1.1 in \cite{LOZ}.
The  operator $\Gdl $ in \eqref{ILW1}
is defined by
\begin{align}
\Gdl   = 
\Tdl - \dl^{-1} \dx^{-1},  
\label{OP0}
\end{align}

\noi
where $\Tdl$
is given by 
\begin{align}
\ft{\Tdl f}(\xi)  =  -  i \coth(\dl \xi ) \ft f(\xi), 
\quad \xi \in\ft \M.
 \label{OP0a}
\end{align}

\noi
Here, $\ft \M$ denotes 
the Pontryagin dual of $\M$, i.e.
$\ft \M = \R$ if $\M = \R$, 
and 
$\ft \M = \Z$ if $\M = \T$, 
and 
  $\coth$ denotes the usual hyperbolic cotangent function:
\begin{align}
\coth(x ) = \frac{   e^x + e^{-x}  }{ e^x - e^{-x}  }
= \frac{e^{2x}+1}{e^{2x}-1}, \quad x \in \R\setminus\{0\}
\label{OP1a}
\end{align}

\noi
with the convention $\coth(x) - \frac{1}{x} = 0$ for $x=0$.
Note that 
$\Tdl$ 
 is the inverse of the 
so-called Tilbert transform (with an extra $-$ sign), 
appearing in the study of water waves
of finite depth \cite{BLS, HIT, AIT}.
In particular, $\Tdl$
is related to 
the Dirichlet-to-Neumann map in a two-dimensional finite depth
two-layer fluid, 
and in the deep-water limit ($\dl \to \infty$), 
it converges to the Hilbert transform $\H$
(with the Fourier multiplier $-i \sgn (\xi)$), 
corresponding to 
the Dirichlet-to-Neumann map 
in infinite depth, 
which indicates
possible convergence of ILW~\eqref{ILW1}
to 
the Benjamin-Ono equation~(BO):
\begin{align}
\dt u -
 \H \partial_x^2 u  =  \dx (u^2).
\label{BO1}
\end{align}

\noi
Indeed, 
 ILW \eqref{ILW1} is an important physical model,  
 providing 
 a natural connection between 
the deep-water regime (= the Benjamin-Ono regime) and the shallow-water regime
(= the KdV regime).
As such, it has been studied extensively from both the applied and theoretical points of view.
See, for example,  
a recent book  \cite[Chapter~3]{KS2022} by  Klein and Saut
for an overview of the subject
and the references therein.
See also a survey \cite{Saut2019}.
While the ILW equation  has attracted increasingly more attention
in recent years
(see, for example,  \cite{MV, KPV20, MPS, Gli, Gli2, LOZ, IS, CLOZ}), 
these two references indicate that the rigorous mathematical study of 
ILW is still widely  open, 
especially as compared to BO and KdV.
In particular, one of the fundamental, but challenging questions
is the convergence properties
of ILW in the deep-water limit ($\dl\to \infty$)
and in the shallow-water limit 
($\dl \to 0$).
See \cite{Gli, Gli2, LOZ, CLOZ}
for recent progress on convergence problems
from both the deterministic and statistical points of view.

Our main goal in this paper is to present
a unified argument for proving $L^2$ global well-posedness
of ILW \eqref{ILW1} on both the real line and the circle
and then to establish convergence of the ILW dynamics
to the BO dynamics in the deep-water limit
at the $L^2$-level.
Before stating our main result, 
let us recall a few conservation laws for ILW \eqref{ILW1}:
\begin{align}
\begin{split}
 \text{mean: } & \int_\M u  dx, 
\qquad 
\text{mass: } M(u) = \int_\M u^2  dx, \\
 \text{Hamiltonian: }
& E_\dl(u) =
\frac 12   \int_\M  u \Gdl \dx u   dx
+ \frac{1}{3} \int_\M u^{3} dx.
\end{split}
\label{cons1}
\end{align}

\noi
In particular, ILW \eqref{ILW1}
is a Hamiltonian PDE.
Furthermore, it is known to 
 be completely integrable.
We, however, do not make use of the completely integrable structure
in this paper.

We now state our first result.

\begin{theorem}\label{THM:1}
Let $\M = \R$ or $\T$  and $0 < \dl < \infty$.
Then, given any $s \ge 0$, ILW \eqref{ILW1} is 
globally well-posed in $H^s(\M)$.
Moreover, when $\M = \T$, 
by restricting to  the subspace $H^s_0(\T)$
consisting  of mean-zero functions, 
the solution map for \eqref{ILW1}
is locally 
Lipschitz continuous
from $H^s_0(\T)$ to $C(\R; H^s_0(\T))$, 
where the latter is endowed with the compact-open topology \textup{(}in time\textup{)}.

\end{theorem}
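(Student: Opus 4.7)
The plan is to adapt the $L^2$ well-posedness argument for BO due to Molinet and Pilod to ILW~\eqref{ILW1}. The starting observation is that the ILW dispersion $\Gdl \dx^2$ agrees with the BO dispersion $\H \dx^2$ modulo a perturbative remainder: the symbol of $\Tdl - \H$ decays exponentially as $|\dl \xi|\to \infty$, while $\dl^{-1}\dx^{-1}\cdot \dx^2 = \dl^{-1}\dx$ is of strictly lower order than $\dx^2$. Hence the relevant linear estimates for BO (Strichartz, Kato smoothing, maximal function) and the short-time Bourgain-type function spaces from Molinet--Pilod should carry over with routine modifications.

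Next, I would introduce a BO-type gauge transform. Let $F$ be a primitive of (a suitable high-frequency cutoff of) the solution $u$; on $\M=\T$ one sets $F = \dx^{-1}(P_{\neq 0}u)$, which naturally restricts the problem to the mean-zero subspace $H^s_0(\T)$. Setting $w = \dx P_{\mathrm{high}}\bigl(e^{-iF}\bigr)$, the chain rule cancels the worst high$\times$low interaction in $\dx(u^2)$, and the surviving gauged equation can be estimated at the $L^2$-level in suitable short-time $X^{s,b}$-type spaces. The additional contributions arising from commuting the gauge with the lower-order piece $\Gdl\dx^2 - \H\dx^2$ are smoother and are absorbed as perturbations on a short time interval.

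With local-in-time $L^2$ estimates for the gauged variable $w$ and Lipschitz dependence on the data (via standard differences of two solutions), one ungauges to obtain local well-posedness of~\eqref{ILW1} in $L^2(\M)$, or in $L^2_0(\T)$ on the circle. Conservation of the mass $M(u)=\|u\|_{L^2}^2$ from~\eqref{cons1} then immediately upgrades local to global well-posedness at the $L^2$-level. Persistence of regularity in $H^s$ for $s>0$ follows from a standard energy estimate at the $H^s$-level, performed after gauging so as to cancel the leading derivative loss, combined with a Gronwall argument on any finite time interval; on $\T$, the conserved mean $\int_\T u\,dx$ is recovered by an affine change of variables.

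The main technical obstacle is checking that every step of the Molinet--Pilod scheme survives the replacement of $\H$ by $\Gdl\dx^{-1}$, at least for each fixed $\dl$. Concretely, the sharp multilinear estimates for the gauged nonlinearity require fine information on the modulation function $\omega_\dl(\xi)=\xi^2\coth(\dl\xi)-\xi/\dl$ and on the associated three-wave resonance; after frequency localization, one should verify that the resonance behaves like its BO counterpart at high frequencies so that the BO multiplier bounds apply verbatim, together with an elementary treatment of the low-frequency regime. In parallel, the commutators generated by the gauge now involve $\Tdl$ rather than $\H$, and their boundedness properties must be re-established with constants that are ideally uniform in $\dl$, so that the same estimates feed into the deep-water limit result stated elsewhere in the paper.
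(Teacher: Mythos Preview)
Your overall strategy (BO-type gauge transform, Molinet--Pilod style estimates on the gauged variable, then mass conservation for globalization) matches the paper's. However, you diverge from the paper at a key technical point, and the route you propose is harder and not fully justified.

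The paper does \emph{not} work with ILW-adapted $X^{s,b}$ spaces built on the modulation $\omega_\dl(\xi)=\xi^2\coth(\dl\xi)-\xi/\dl$, nor does it redo any resonance analysis for the three-wave interaction. Instead it first applies the Galilean transform $v(t,x)=u(t,x+\dl^{-1}t)$, which removes the term $\dl^{-1}\dx^{-1}$ exactly, and then writes the renormalized ILW as
\[
\dt v - \H\dx^2 v = \Qdl\dx v + \dx(v^2), \qquad \Qdl=(\Tdl-\H)\dx.
\]
The crucial observation (Lemma~\ref{LEM:smooth}) is that $\Qdl$ is smoothing of \emph{arbitrary} order, so every ILW-specific contribution in the gauged equation is a harmless lower-order perturbation. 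Consequently the paper works entirely in the BO framework: the BO linear propagator, the BO $X^{s,b}$-spaces with phase $\tau-|n|n$, and the bilinear estimates of \cite{MP} are used verbatim for the first two (BO-type) terms of the gauged nonlinearity, while the new $\Qdl$-terms are estimated crudely in $L^{1+}_T H^s_x$. No ILW modulation function enters anywhere.

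Two smaller points. First, Molinet--Pilod do not use short-time Bourgain spaces, Kato smoothing, or maximal function estimates; those belong to the Ionescu--Kenig scheme. The paper uses the standard restricted spaces $Y^{s,\frac12}(T)$ together with the periodic $L^4$ Strichartz estimate (Lemma~\ref{LEM:L4}). Second, your suggestion to treat $\dl^{-1}\dx$ directly as a first-order perturbation is less clean than removing it by the Galilean shift: after the shift, the only $\dl$-dependent remainder is $\Qdl$, which is not merely one order lower but infinitely smoothing, and this is what makes the perturbative argument close (and what feeds directly into the deep-water limit of Theorem~\ref{THM:2}).
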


In \cite{MST}, it was shown that 
 both BO and ILW
are quasilinear in the sense that 
the solution map is not smooth; see also \cite{KTz2}.
This  in particular implies
that we can not use a contraction argument 
for constructing solutions, making the  well-posedness question 
rather challenging.
Nonetheless, the  well-posedness issue of 
the BO equation \eqref{BO1}, corresponding to 
the infinite depth case ($\dl = \infty$), 
has been studied extensively
on both the real line and the circle;
see \cite{Iorio, Ponce, KTz1, TAO04, IK, Moli1, BP, Moli2, MP, MV, IT1, 
GKT, KLV}.
On the other hand, there are only a handful of known well-posedness
results 
for ILW \eqref{ILW1};
see \cite{ABFS, MV, MPV, IS}.
In \cite{MV}, 
Molinet and Vento proved 
local well-posedness of \eqref{ILW1} in $H^\frac 12(\M)$, 
$\M = \R$ or $\T$, 
via the improved energy method, 
which immediately extended
to global well-posedness thanks to the conservation laws
in~\eqref{cons1}.
In the real line case, 
Molinet, Vento, and the fourth author
\cite{MPV}
refined the improved energy method in \cite{MV}
(with the short-time Strichartz estimates
as in \cite{KTz1, KK, LPS}
and a generalized  Coifman-Meyer theorem) 
and proved local well-posedness of \eqref{ILW1}
in $H^s(\R)$ for $s > \frac 14$.
In a recent work \cite{IS}, 
Ifrim and Saut studied \eqref{ILW1} on 
the real line and proved its global well-posedness in $L^2(\R)$
(along with long-time behavior of small solutions), 
based on an adaptation of 
the argument  developed for the BO equation \eqref{BO1}
by Ifrim and Tataru~\cite{IT1}
which combines the BO gauge transform and the (quasilinear) norm form method.
We point out that the argument in \cite{IS}
relies on dispersive estimates
and thus is not applicable to the periodic setting.

Our main strategy is to view \eqref{ILW1}
as a perturbation\footnote{after applying the Galilean transform \eqref{gauge0}
to remove the term $\dl^{-1}\dx^{-1}$ in \eqref{OP0}. See \eqref{ILW2}.}
 of \eqref{BO1} as in \cite{IS}
and apply the BO gauge transform, originally introduced by Tao \cite{TAO04}.
We then adapt 
the argument in  \cite{MP}, 
where
Molinet and the fourth author
introduced  a simplified and unified
approach
to the $L^2$ well-posedness
of the BO equation \eqref{BO1}
on both the circle and real line, 
to the current ILW problem.
See Sections \ref{SEC:gauge} and \ref{SEC:LWP}
for details.

Next, we state our second result 
on convergence in the deep-water limit.

\begin{theorem}\label{THM:2}
Let $\M = \R$ or $\T$
and $s\ge 0$.
Given  $u_0 \in H^s(\M)$, 
let $u$ be the global solution to the BO equation \eqref{BO1}
with $u|_{t = 0} = u_0$ constructed in \cite{MP}.
Let  $\{u_{0, \dl}\}_{1 \le \dl <  \infty}\subset H^s(\M)$
such that 
$u_{0, \dl}$ converges to $u_0$ in $H^s(\M)$ as $\dl \to \infty$, 
and 
let $u_\dl$ be the global solution to~\eqref{ILW1} with 
$u_\dl|_{t = 0} = u_{0, \dl}$
constructed in Theorem \ref{THM:1}.
Then, as $\dl \to \infty$, 
$u_\dl$ converges to $u$
in 
 $C(\R; H^s(\M))$, 
endowed with the compact-open topology \textup{(}in time\textup{)}.

\end{theorem}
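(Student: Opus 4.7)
The plan is a Bona--Smith approximation that leverages the uniform-in-$\dl$ character of the well-posedness theory underlying Theorem~\ref{THM:1}. The key preliminary observation is the following uniform local well-posedness statement, to be extracted from the proof of Theorem~\ref{THM:1}: for every $s\ge 0$, $T>0$, and $R>0$, there exists $C=C(s,T,R)$ such that for all $\dl\in[1,\infty)$ and every initial datum of $H^s(\M)$-norm at most $R$, the corresponding flow is defined on $[-T,T]$, bounded in $L^\infty_T H^s(\M)$ by $C$, and Lipschitz continuous in the data with Lipschitz constant $C$; the formal limit $\dl=\infty$ is the BO case treated in \cite{MP}. Verifying this amounts to tracking through Sections~\ref{SEC:gauge}--\ref{SEC:LWP} that no constant degenerates as $\dl\to\infty$, which is natural since $|\coth(\dl\xi)|\le 1$ uniformly and the Galilean reduction removes the only low-frequency singular contribution $\dl^{-1}\dx^{-1}$.

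With such uniform bounds in hand, I would fix $T>0$ and pass to smooth approximants: set $u_0^N := P_{\le N}u_0$ and $u_{0,\dl}^N := P_{\le N}u_{0,\dl}$, and let $u^N$ (resp.\ $u_\dl^N$) be the BO (resp.\ ILW) solution with this smoothed datum, which persists at any Sobolev level on $[-T,T]$ with bounds uniform in $\dl$. The crux is a direct difference estimate: $w_\dl^N := u_\dl^N - u^N$ satisfies
\begin{align*}
\dt w_\dl^N - \Gdl \dx^2 w_\dl^N = (\Gdl - \H)\dx^2 u^N + \dx\bigl((u_\dl^N + u^N)\, w_\dl^N\bigr),
\end{align*}
and a uniform-in-$\dl$ $H^s$-energy estimate built on the commutator/symmetrization bounds in the framework of Section~\ref{SEC:LWP}, followed by Gr\"onwall, yields
\begin{align*}
\|w_\dl^N\|_{C_T H^s(\M)} \les \|u_{0,\dl}^N - u_0^N\|_{H^s(\M)} + T\,\bigl\|(\Gdl - \H)\dx^2 u^N\bigr\|_{L^\infty_T H^s(\M)}.
\end{align*}
For fixed $N$, the smoothness of $u^N$ absorbs the two-derivative loss, while the Fourier multiplier $-i\bigl(\coth(\dl\xi) - (\dl\xi)^{-1} - \sgn\xi\bigr)$ of $\Gdl - \H$ converges to $0$ pointwise on $\ft\M\setminus\{0\}$ and remains uniformly bounded; dominated convergence on the compact frequency support of $u^N$ then forces $\|w_\dl^N\|_{C_T H^s(\M)}\to 0$ as $\dl\to\infty$.

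The conclusion follows from the triangle inequality
\begin{align*}
\|u_\dl - u\|_{C_T H^s(\M)} \le \|u_\dl - u_\dl^N\|_{C_T H^s(\M)} + \|u_\dl^N - u^N\|_{C_T H^s(\M)} + \|u^N - u\|_{C_T H^s(\M)}.
\end{align*}
The uniform Lipschitz continuous-dependence bounds the first and third summands by $C\bigl(\|u_{0,\dl}-u_{0,\dl}^N\|_{H^s(\M)} + \|u_0 - u_0^N\|_{H^s(\M)}\bigr)$, which is made arbitrarily small, uniformly in $\dl\ge 1$, by choosing $N$ large, using $u_{0,\dl}\to u_0$ in $H^s(\M)$ together with $\|(\Id-P_{\le N})f\|_{H^s(\M)}\to 0$ for $f\in H^s(\M)$. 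Sending $\dl\to\infty$ with $N$ fixed then eliminates the middle term, yielding convergence in $C([-T,T];H^s(\M))$ for every $T>0$, i.e.\ in the compact-open topology on $\R$.

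The main obstacle is the uniform-in-$\dl$ quantification of the nonlinear analysis: the ILW gauge transform and the short-time Bourgain-type estimates developed in the proof of Theorem~\ref{THM:1} must be shown to have constants independent of $\dl$. This is delicate but natural, since the limit $\dl=\infty$ is precisely the BO theory of \cite{MP}; once uniformity is in place, the Bona--Smith synthesis above is routine.
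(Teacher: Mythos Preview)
Your key preliminary---that the ILW flow is Lipschitz in the data uniformly in $\dl\ge 1$---is false on $\R$: the solution map for ILW (and for BO) is not even locally uniformly continuous on $H^s(\R)$ for $0\le s<\tfrac12$; see the last remark in Section~\ref{SEC:LWP} and \cite[Section~4]{MP}. On $\T$, Lipschitz continuity holds only on hyperplanes of fixed mean (Theorem~\ref{THM:1}). The repair is that you do not actually need full Lipschitz dependence: the Bona--Smith pairs $(u_{0,\dl},\,P_{\le N}u_{0,\dl})$ have identical low-frequency parts once $N$ is large, which is exactly the extra hypothesis~\eqref{diff3} under which the difference bound \eqref{ZP13} \emph{does} hold on $\R$ (Remark~\ref{REM:diff}); on $\T$ one must first reduce to mean zero via \eqref{gaugex}. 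A secondary issue: the estimate you invoke for the smooth middle term is not ``the framework of Section~\ref{SEC:LWP}'' (which is gauge transform plus $X^{s,b}$), but rather a classical $H^\sigma$-energy estimate at some $\sigma>\tfrac12$ as in \cite{MV}, whose constants are indeed uniform for $\dl\ge 1$ thanks to Lemma~\ref{LEM:smooth}.

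With these corrections your Bona--Smith scheme does go through, but it is a genuinely different route from the paper's. The paper bypasses approximation entirely: it reruns the gauge-transform difference estimates of Subsection~\ref{SUBSEC:LWP2} directly with $v_1=v_\dl$ solving renormalized ILW and $v_2=v_\infty$ solving BO, tracks the extra $\Qdl$-contributions through Lemma~\ref{LEM:smooth}, and obtains the quantitative bound~\eqref{XP9}, namely $\|v_\dl-v_\infty\|_{C_{T_*}H^s}\le C(M)\|u_{0,\dl}-u_0\|_{H^s}+C(M)\dl^{-1}$, before undoing the Galilean transforms \eqref{gauge0} and \eqref{gaugex} via Lemma~\ref{LEM:trans}. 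Your approach is more modular but softer: dominated convergence yields no rate, whereas the paper delivers an explicit $O(\dl^{-1})$.
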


Theorem \ref{THM:2} follows
from a minor adaptation of the proof of Theorem \ref{THM:1}.
See Section \ref{SEC:conv}.

\medskip

In \cite{Gli}, the second author proved the deep-water convergence
on both the real line and the circle
for $s > \frac 12$.
In view of the known results \cite{MV, Gli}, 
we restrict our attention to the range
$0 \le s \le \frac 12$
in the remaining part of the paper.
Furthermore, we present the proofs of Theorems 
\ref{THM:1} and \ref{THM:2} only in the periodic case, 
which is closely related to the study of invariant measures;
see Remark \ref{REM:inv}.
The required modification for the real line case
is minor and straightforward in view of the presentation
of the current paper and \cite{MP}, where
the latter treated the real line case.
In the periodic setting, 
in order to avoid the problem at the zeroth frequency, 
it is crucial to work with mean-zero functions.
In view of the conservation of the spatial mean, 
we  perform a change of unknowns via
the following Galilean transform \cite{CKSTT2} (with $\mu (u_0) = \frac 1{2\pi}\int_\T u_0(x) dx$):
\begin{align}
\wt u(t, x) = \G_{u_0}(u) (t, x) = u (t, x- 2\mu(u_0) t) - \mu (u_0)
\label{gaugex}
\end{align}

\noi
to reduce the problem to the mean-zero case in proving Theorem \ref{THM:1}.
Lastly, for simplicity of the presentation, 
we restrict our attention to 
positive times in the remaining part of the paper.

\begin{remark}\rm
(i)
The local Lipschitz continuity 
the solution map for \eqref{ILW1}
on $\T$ claimed in Theorem~\ref{THM:1}
indeed holds 
on  any hyperplane of $H^s(\T)$ of functions with a fixed mean.
In \cite[Theorem 1.2]{Moli2}, 
Molinet proved that the solution map to \eqref{BO1} on $H^s_0(\T)$
is in fact real-analytic
for $s\ge 0$ (and fails to be smooth for $s < 0$).
While we expect the same results to hold for ILW \eqref{ILW1}, 
we do not pursue this issue in this paper.

\medskip

\noi
(ii) Proceeding as in \cite{MP}, 
we can prove 
unconditional uniqueness
of  
ILW \eqref{ILW1}
in $H^s(\R)$ for $ \frac 14 < s \le \frac 12$ and $H^\frac 12 (\T)$.
(For $s > \frac 12$, it was shown in \cite{MV} 
on both the real line and the circle.)
Since the required modification is straightforward, 
we omit details.

\end{remark}

\begin{remark}\label{REM:shallow}\rm
Given $\dl > 0$, 
consider  the following scaling transformation \cite{ABFS}:
\noi
\begin{align}
v(t,x) = \tfrac{3}{ \dl} u\big(\tfrac{3}{ \dl} t,x\big),
\label{sILW1}
\end{align}

\noi
which transforms \eqref{ILW1} to  the following scaled ILW equation:
\begin{align}
\dt v   -  \frac{3}{ \dl}  \Gdl   \dx^2 v= \dx(v^2).
\label{sILW2}
\end{align}

\noi
Then, under suitable assumptions, 
the scaled ILW~\eqref{sILW2} is known to  converge to the  KdV equation
in the shallow-water limit ($\dl\to 0$)
\cite{ABFS, Gli, LOZ}:
\begin{align*}
\dt v + \dx^3 v = \dx(v^2) . 
\end{align*}

\noi
It would be of interest to study the shallow-water convergence
of ILW \eqref{ILW1} at the $L^2$-level.
As mentioned above, our argument for proving Theorem \ref{THM:1} relies
on the Galilean transform~\eqref{gauge0} to remove $\dl^{-1}\dx^{-1}$
in~\eqref{OP0} (which is crucial in the shallow-water limit)
which is not amenable to the shallow-water limit
since the spatial translation by $\dl^{-1} t$ diverges as $\dl \to 0$.

We point out that the scaling transformation \eqref{sILW1} is consistent with the physical
setting.
The ILW equation \eqref{ILW1} describes the motion
of the fluid interface in a stratified fluid of depth $\dl > 0$,
where $u$ denotes the amplitude of the internal wave at the interface.
As $\dl \to 0$, the entire fluid depth tends to $0$
and, in particular, 
the amplitude of the internal wave at the interface is $O(\dl)$,  which also  tends to $0$.
Hence, if we want to observe any meaningful limiting behavior, 
we need to magnify the fluid motion by a factor $\sim \frac 1\dl$, which
is exactly 
what the scaling transformation \eqref{sILW1} does.

\end{remark}

\begin{remark}\label{REM:inv}\rm
The ILW  equation \eqref{ILW1}
and the BO equation \eqref{BO1} are known to be completely integrable.
In particular, they possess infinite sequences of conservation laws $E_{\dl, \frac k2}(u)$
and $E_{\BO, \frac k2}(u)$
 at  regularities
$\frac k2$, $k \in \N \cup\{0\}$.
In \cite{LOZ, CLOZ}, 
the first three authors (with Zheng)
constructed the (formally invariant) measures $\rho_{\dl, \frac k2}$
(with a density of the form $\exp(-  E_{\dl, \frac k2}(u))$
associated with these conservation laws for the periodic ILW on $\T$, $k \in \N$,
and proved their convergence 
to the corresponding measures for the periodic BO,  constructed by Tzvetkov and Visciglia \cite{TV1}, 
in the deep-water limit.\footnote{In \cite{LOZ, CLOZ}, 
convergence in the shallow-water limit was also studied, but we 
restrict our discussion to  the deep-water limit here.}
Moreover, when $k \ge 3$, 
they established invariance of these measures $\rho_{\dl, \frac k2}$ under the ILW dynamics 
(for each fixed $0 < \dl< \infty$)
and convergence of the associated invariant ILW dynamics
to the corresponding invariant BO dynamics \cite{TV2, TV3} in the deep-water limit.

It is known that 
$\rho_{\dl, \frac k2}$ is supported on $H^{\frac k2 - \eps}(\T)\setminus H^{\frac k2}(\T)$
for any $\eps > 0$.
In particular, when $k = 2$, 
the measure $\rho_{\dl, 1}$ is supported slightly outside $H^\frac 12 (\T)$, 
where  Theorem \ref{THM:1} now provides globally well-defined dynamics.
Therefore, the $k = 2$ case is now within reach.
In a forthcoming work, we will treat the $k = 2$ case:
invariance of the measure $\rho_{\dl, 1}$ under the ILW dynamics
(for each fixed $0 < \dl< \infty$)
and deep-water convergence of the invariant $\rho_{\dl, 1}$-dynamics to the 
corresponding invariant BO dynamics
due to Deng, Tzvetkov, and Visciglia~\cite{DTV}.

\end{remark}


\begin{remark}\rm
In a recent preprint \cite{CFLOP}, 
with J.~Forlano, we
further exploited the idea
of viewing ILW \eqref{ILW1}
as a perturbation of BO \eqref{BO1}
and established
a global-in-time a priori
bound on the $H^s$-norm of a solution to \eqref{ILW1}
for $ - \frac 12 < s < 0$.
More precisely, 
by making use of the complete integrability of the BO equation, 
we showed that, given $ - \frac 12 < s < 0$,  there exists 
an (explicit) non-decreasing function $C_s: \R_+ \times \R_+ \to \R_+$
such that 
\[ \| u(t) \|_{H^s} \leq C_s\big(|t|, \|u_0\|_{H^s}\big)\]

\noi
for 
 any smooth solution $u$ to \eqref{ILW1}
on 
$\M = \R$ or $\T$
and any $t \in \R$.
In the same work, we
also proved ill-posedness of \eqref{ILW1}
in $H^s(\M)$ for $s < -\frac 12$, 
thus establishing sharpness (of the regularity range) of the 
aforementioned a priori bound
(modulo the endpoint $s = - \frac 12$).
Despite the lack of scaling invariance, 
these results allow us to identify
 $s = -\frac 12$
as  a ``critical'' regularity for ILW;
see Remark~\ref{REM:scaling} below
for a discussion on the scaling property of ILW.

We also mention a recent work \cite{LP}, 
where the authors
 studied long time behavior of solutions 
to ILW \eqref{ILW1} on the real line, 
 in the same spirit 
 as the work  \cite{FLMP}
 on long time decaying properties of solutions to 
 the BO equation \eqref{BO1}.

\end{remark}


\section{Notations, function spaces, and preliminary 
estimates}
\label{SEC:2}

By $A\les B$, we mean  $A\leq CB$ 
for some  constant $C> 0$.
We use   $A \sim  B$
to mean 
 $A\les B$ and $B\les A$.
We write $A\ll B$, if there is some small $c>0$, 
such that  $A\leq cB$.  

Let $s \in \R$ and $1 \leq p \leq \infty$.
We define the $L^2$-based Sobolev space $H^s(\T)$
by the norm:
\begin{align*}
\| f \|_{H^s} =   \| \jb{n}^s \ft f (n) \|_{\l^2_n},
\end{align*}

\noi
where 
$\jb{x} = (1 + |x|^2)^\frac 12$, 
and we set $H^s_0(T) = \big\{ f \in H^s(\T): \ft f(0) = 0\big\}$
as the subspace consisting of mean-zero functions.
We also define the $L^p$-based Sobolev space $W^{s, p}(\T)$
by the norm:
\begin{align*}
\| f \|_{W^{s, p}} =   \|\jb{D}^s f\|_{L^p} = \big\| \F^{-1} (\jb{n}^s \ft f(n)) \big\|_{L^p}, 
\end{align*}

\noi
where 
$D = - i \dx$ (with the Fourier multiplier $n$)
and 
$\F^{-1}$ denotes the inverse Fourier transform.
When $p = 2$, we have $H^s(\T) = W^{s, 2}(\T)$. 
In dealing with space-time function spaces, 
we use short-hand notations such as
$L^p_T H^s_x$
for $L^p([0, T]; H^s(\T))$.

Let $\eta:\R \to [0, 1]$ be a smooth  bump function supported on $[-\frac{8}{5}, \frac{8}{5}]$ 
and $\eta\equiv 1$ on $\big[-\frac 54, \frac 54\big]$.
For $\xi \in \R$, we set $\varphi_0(\xi) = \eta(|\xi|)$
and 
\begin{align*}
\varphi_{j}(\xi) = \eta\big(\tfrac{|\xi|}{2^j}\big)-\eta\big(\tfrac{|\xi|}{2^{j-1}}\big)
\end{align*}

\noi
for $j \in \N$.
Then, for $j \in \Z_{\geq 0} := \N \cup\{0\}$, 
we define  the Littlewood-Paley projector  $\Q_j$ 
as the Fourier multiplier operator with a symbol $\varphi_j$.
Note that we have 
\begin{align*}
\sum_{j = 0}^\infty \varphi_j (\xi) = 1
\end{align*}

\noi
 for each $\xi \in \R$. 
Thus, 
we have the following Littlewood-Paley decomposition:
\[ f = \sum_{j = 0}^\infty \Q_j f. \]

Define the projectors $\P_+$ and $\P_-$ by setting
\[ \P_{+} f = \F^{-1} (\ind_{\{n \ge 1\}} \ft f(n)) 
\qquad \text{and}
\qquad 
\P_{-} f = \F^{-1} (\ind_{\{n \le -1\}} \ft f(n)) .\]

\noi
We also define $\P_\hi$, $\P_\HI$, $\P_\lo$, and $\P_\LO$
by setting 
\begin{align}
 \P_\hi = \sum_{j = 1}^\infty \Q_j, 
\quad 
\P_\HI = \sum_{j = 3}^\infty \Q_j, 
\quad \P_\lo = \Id - \P_\hi, 
\quad \text{and} \quad 
\P_\LO = \Id - \P_\HI, 
\label{proj1}
\end{align}

\noi
and set $\P_{\pm, \hi} = \P_{\pm} \P_\hi$, etc. 

Next, we
recall the $X^{s, b}$-spaces  introduced by 
Bourgain \cite{BO1, BO2} and their variants
defined by the following norms:
\begin{align*}
\| u\|_{X^{s, b}} & = \| \jb{n}^s \jb{\tau - |n|n}^b \ft u(\tau, n) \|_{\l^2_nL^2_\tau }, \\
\| u\|_{Z^{s, b}} & = \| \jb{n}^s \jb{\tau - |n|n}^b \ft u(\tau, n) \|_{\l^2_nL^1_\tau}, \\
\| u\|_{\wt Z^{s, b}} 
& =  \bigg(\sum_{j = 0}^\infty 
\| \Q_j u\|_{ Z^{s, b}}\bigg)^\frac 12, \\
\| u\|_{Y^{s, b}}
& = \| u\|_{X^{s, b}}
+ \| u\|_{\wt Z^{s, b- \frac 12 }}.
\end{align*}

\noi
See
\cite{BO1, BO2, KPV93b, CKSTT} for basic properties.
Note that we have $\wt Z^{s, b} \subset Z^{s, b}$.
For  $T>0$,  we define the space $X^{s,b}(T)$ to be the restriction of 
the $X^{s, b}$-space onto the time interval 
$[0, T]$ via the norm:
\begin{equation}
\| u \|_{X^{s,b}(T)} := \inf \big\{ \| v \|_{X^{s,b}}: v|_{[0, T]} = u \big\}.
\label{loc1}
\end{equation}

\noi
We also define the restriction spaces $\wt Z^{s, b}(T)$
and $Y^{s, b}(T)$
in an analogous manner.
Note that $Y^{s, b}(T)$ is complete.
Recall the following embedding:
\begin{align}
 Y^{s, \frac 12}(T) \subset  Z^{s,0}(T) \subset C([0, T]; H^s(\T)).
 \label{loc2}
\end{align}

Let $S(t)$ denote the linear propagator
for the BO equation \eqref{BO1} defined by 
\begin{align*}
\ft {S(t) f}(n) = e^{i t|n| n} \ft f(n).
\end{align*}

\noi
Then, we have the following linear estimates;
see
\cite{BO1, KPV93b, TAO, MP}.

\begin{lemma}\label{LEM:lin}
Let $s \in \R$.  Then, we have 
\begin{align*}
\| \eta(t) S(t) f \|_{Y^{s, \frac 12}} & \les \| f\|_{H^s},\\
\bigg\| \eta(t) \int_0^t S(t - t') 
F (t') dt'\bigg\|_{Y^{s, \frac 12}}
& \les \| F \|_{Y^{s, - \frac 12}}.
\end{align*}

\end{lemma}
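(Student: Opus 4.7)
The plan is to exploit the explicit Fourier representation of $\eta(t) S(t) f$ and of the Duhamel integral, reducing both estimates to Schwartz decay properties of $\eta$ and its derivatives. This is a classical computation in the $X^{s,b}$-framework going back to \cite{BO1, KPV93b}, with the extra $\wt Z^{s, 0}$ piece handled via the Littlewood-Paley decomposition in the spatial frequency. I would present the argument in two steps corresponding to the two displayed inequalities.

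For the homogeneous estimate, I would start from the identity
\begin{align*}
\F_{t, x} \big[ \eta(t) S(t) f \big](\tau, n) = \ft \eta(\tau - |n|n) \, \ft f(n).
\end{align*}
The $X^{s, \frac12}$-contribution then factorizes as
$\| \jb{n}^s \ft f(n) \|_{\l^2_n} \cdot \| \jb{\sigma}^{\frac12} \ft \eta(\sigma) \|_{L^2_\sigma}$,
and the second factor is finite since $\eta$ is Schwartz. For the $\wt Z^{s, 0}$-contribution, I would apply $\Q_j$ (localizing $|n| \sim 2^j$) and use
$\|\Q_j (\eta S(\cdot) f)\|_{Z^{s, 0}} \les \|\ft \eta\|_{L^1} \, \| \Q_j f \|_{H^s}$,
after which the definition of $\wt Z^{s, 0}$ together with Plancherel recovers $\|f\|_{H^s}$.

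For the inhomogeneous (Duhamel) estimate, the key identity is the Fourier-side representation
\begin{align*}
\F_{t, x} \bigg[ \eta(t) \int_0^t S(t - t') F(t') \, dt' \bigg](\tau, n)
= \int_\R \ft \eta(\tau - \lambda) \, \frac{e^{i t \lambda} - e^{i t |n|n}}{i (\lambda - |n|n)} \ft F(\lambda, n) \, d\lambda,
\end{align*}
viewed inside the appropriate norm. I would split the integrand according to whether $\jb{\lambda - |n|n} \ges 1$ or $\jb{\lambda - |n| n} \ll 1$. In the high-modulation regime the denominator $(\lambda - |n|n)^{-1}$ is absorbed into the $\jb{\lambda - |n|n}^{-\frac12}$-weight of $Y^{s, -\frac12}$, yielding both the $X^{s, \frac12}$- and $\wt Z^{s, 0}$-bounds directly. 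In the low-modulation regime, I would Taylor-expand
$e^{it(\lambda - |n|n)} - 1 = \sum_{k \ge 1} \frac{(it(\lambda - |n|n))^k}{k!}$,
cancelling the singular denominator and producing a rapidly convergent series of terms of the form $\eta(t) t^k S(t) g_k$, each estimated via the homogeneous bound from the first step.

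The most delicate point is tracking the Littlewood-Paley pieces $\Q_j$ through the Taylor expansion in the low-modulation regime, so as to obtain the $\wt Z^{s, 0}$-control rather than only the $X^{s, \frac12}$-control. This is precisely the reorganization carried out in \cite{MP} on the real line, and no new ingredient is needed on the circle: the Fourier-side analysis is formally identical, and once the sum over $j$ is regrouped correctly via the definition of $\wt Z^{s, 0}$, both estimates reduce to the Schwartz decay of $\eta$ and $t \eta$. I expect no genuine obstacle beyond bookkeeping, and would refer to \cite{MP} for the detailed modulation analysis rather than reproduce it.
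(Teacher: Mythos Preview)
The paper does not supply its own proof of this lemma; it is stated with the remark ``see \cite{BO1, KPV93b, TAO, MP}'' and nothing further. Your proposal follows precisely the standard argument from those references and is correct in outline, so there is no divergence to report.

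One minor slip worth fixing: in your displayed identity for the Duhamel term, the factors $e^{it\lambda}$ and $e^{it|n|n}$ should not survive after applying $\F_{t,x}$. The correct post-Fourier formula is
\begin{align*}
\F_{t, x} \bigg[ \eta(t) \int_0^t S(t - t') F(t') \, dt' \bigg](\tau, n)
= \int_\R \frac{\ft \eta(\tau - \lambda) - \ft \eta(\tau - |n|n)}{i (\lambda - |n|n)}\, \ft F(\lambda, n) \, d\lambda,
\end{align*}
obtained by first inverting the temporal Fourier transform of $F$, integrating in $t'$, and only then taking $\F_t$. Your high/low modulation split and the Taylor expansion in the low-modulation regime go through unchanged with this corrected identity, and the bookkeeping for the $\wt Z^{s,0}$-piece is exactly as in \cite{MP}.
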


Next, we recall the periodic $L^4$-Strichartz estimate
due to Bourgain \cite{BO1}; see also \cite[(6)]{Moli1}.

\begin{lemma}\label{LEM:L4}
The following estimate holds\textup{:}
\begin{align*}
\| u \|_{L^4_{t, x}} \les \|u \|_{X^{0, \frac 38}}
\les \|u \|_{Y^{0, \frac 12}}.
\end{align*}

\end{lemma}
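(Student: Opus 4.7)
The plan is to follow Bourgain's classical approach \cite{BO1}, as adapted by Molinet \cite{Moli1} to the BO dispersion $\tau = |n|n$. The second inequality $\|u\|_{X^{0,3/8}}\les \|u\|_{Y^{0,1/2}}$ is immediate from the definition of $Y^{0,1/2}$ and the pointwise bound $\jb{\tau - |n|n}^{3/8} \leq \jb{\tau - |n|n}^{1/2}$, so the heart of the matter is the first inequality.

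For the main bound $\|u\|_{L^4_{t,x}}\les \|u\|_{X^{0,3/8}}$, I would assume $u$ real (splitting into real and imaginary parts if necessary), square, and apply Plancherel:
\begin{align*}
\|u\|_{L^4_{t,x}}^4 = \|u^2\|_{L^2_{t,x}}^2 = \sum_{n_0 \in \Z}\int_\R \bigg|\sum_{\substack{n_1, n_2 \in \Z\\ n_1+n_2=n_0}}\int_\R \widehat u(\tau_1, n_1)\,\widehat u(\tau_0-\tau_1, n_2)\,d\tau_1\bigg|^2 d\tau_0.
\end{align*}
Setting $F(\tau, n) := \jb{\tau-|n|n}^{3/8}|\widehat u(\tau, n)|$ (so that $\|F\|_{L^2_{\tau, n}} = \|u\|_{X^{0,3/8}}$) and decomposing dyadically in the modulations $L_j \sim \jb{\tau_j - |n_j|n_j}$, Cauchy--Schwarz in the inner convolution (with weight $\jb{\cdot}^{-3/8}$) reduces the estimate to the arithmetic counting bound: for fixed $n_0 \in \Z\setminus\{0\}$, $m \in \R$, and $L\geq 1$,
\begin{align*}
\#\bigl\{(n_1, n_2) \in \Z^2 : n_1+n_2=n_0,\ |n_1|n_1+|n_2|n_2 \in [m, m+L]\bigr\} \les 1 + L^{1/2} + L/|n_0|.
\end{align*}

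The two error terms reflect the two sign regimes. When $n_1$ and $n_2$ share a sign, the symbol equals $\pm(n_0^2 - 2 n_1 n_2)$, and the count reduces via the discriminant to the parabolic divisor-type bound $1 + L^{1/2}$, exactly as in Bourgain's original $L^4$ argument on $\T$ for the Schr\"odinger equation. When the signs differ, the symbol collapses to $\pm n_0(n_1 - n_2)$, linear in $n_1 - n_2$, giving the transversal count $1 + L/|n_0|$. The main obstacle is the sharpness of the exponent $b = \tfrac38$ combined with the small-$|n_0|$ transversal piece: at this endpoint the dyadic $L_1, L_2$ sums are logarithmically borderline, and the $L/|n_0|$ contribution cannot be absorbed naively. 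I would handle it by isolating the opposite-sign interaction $(\P_+ u)(\P_- u)$ from the same-sign pieces $(\P_\pm u)^2$, and treating the former via a bilinear Fourier-restriction argument that exploits orthogonality in the output variable $(n_0, \tau_0)$, keeping the full bilinear pairing intact rather than estimating each modulation block in $L^4$ separately. The degenerate case $n_0 = 0$ contributes $\widehat{u^2}(\tau_0, 0) = \widehat{\|u(\cdot)\|_{L^2_x}^2}(\tau_0)$ and is bounded directly by interpolating the $L^\infty_t L^2_x$ and $L^2_{t, x}$ controls provided by the $X^{0, 3/8}$ norm.
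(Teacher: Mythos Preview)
The paper does not prove this lemma; it simply cites it as a known result due to Bourgain \cite{BO1} (see also \cite[(6)]{Moli1}), so your sketch goes well beyond what the paper provides.

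Your overall strategy is sound but takes an unnecessarily circuitous route, and there is one slip. The clean argument splits $u = \P_+ u + \P_0 u + \P_- u$ via the triangle inequality in $L^4$ \emph{before} squaring. On $\P_\pm u$ the BO phase $|n|n$ coincides with $\pm n^2$, so the $X^{0,3/8}$-norm for the BO dispersion agrees with the Schr\"odinger one, and Bourgain's periodic $L^4$ estimate applies verbatim; the zero mode $\P_0 u$ is constant in $x$ and is handled by the one-dimensional Sobolev embedding $H^{1/4}_t \hookrightarrow L^4_t$ (using $\tfrac38 > \tfrac14$). This sidesteps the mixed-sign interaction entirely. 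Even within your squared-out framework the mixed term is harmless: by H\"older, $\|(\P_+u)(\P_-u)\|_{L^2} \le \|\P_+u\|_{L^4}\|\P_-u\|_{L^4}$, which again reduces to the Schr\"odinger estimate; alternatively, since you have already reduced to real $u$, note that $\P_- u = \overline{\P_+ u}$, so $(\P_+ u)(\P_- u) = |\P_+ u|^2$ and no separate bilinear argument is needed.

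The slip: $X^{0,3/8}$ does \emph{not} control $L^\infty_t L^2_x$; that embedding requires $b > \tfrac12$. For the $n_0 = 0$ piece one has (for real $u$) $\|\P_0(u^2)\|_{L^2_t} \sim \|u\|_{L^4_t L^2_x}^2$, and the correct bound is
\[
\|u\|_{L^4_t L^2_x}^2 = \Big\|\sum_n |\ft u(t,n)|^2\Big\|_{L^2_t} \le \sum_n \|\ft u(\cdot,n)\|_{L^4_t}^2 \les \sum_n \big\|\jb{\tau - |n|n}^{1/4}\ft u(\tau,n)\big\|_{L^2_\tau}^2 = \|u\|_{X^{0,1/4}}^2,
\]
using one-dimensional Sobolev embedding frequency by frequency after modulating out the phase.
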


For $0 < \dl \le \infty$, define $\Qdl$ by 
\begin{align}
\Qdl =( \Tdl - \H)\dx
\label{OP3}
\end{align}

\noi
with the understanding that $\mathcal{Q}_\infty = 0$
(corresponding to the BO equation \eqref{BO1}), 
where $\Tdl$ is as in \eqref{OP0a}.
The operator $\Qdl$ naturally appears
in viewing ILW as a perturbation of the BO equation;
see \eqref{ILW2} and \eqref{ILW4} below.
The next lemma establishes a smoothing property of 
the operator
$\Qdl$;
see also Lemma 2.2 in \cite{IS}.

\begin{lemma}\label{LEM:smooth}
Let $ s\ge 0$.
Then, we have
\begin{align}
\| \Qdl f \|_{H^s} \les \dl^{-1}(1+ \dl^{-s})\|f\|_{L^2}.
\label{smooth0}
\end{align}

\noi
In particular, we have 
\begin{align*}
\| \Qdl f \|_{H^s} \les \|f\|_{L^2}, 
\end{align*}

\noi
uniformly in  $\dl \ge 1$.

\end{lemma}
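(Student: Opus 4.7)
The plan is to compute the Fourier symbol of $\Qdl = (\Tdl - \H)\dx$ explicitly and then obtain \eqref{smooth0} from a pointwise estimate on the symbol. From \eqref{OP0a} and the standard symbol of $\H$, the symbol of $\Qdl$ on $\widehat\M$ is
\[ m_\dl(\xi) = \xi\bigl(\coth(\dl\xi) - \sgn(\xi)\bigr). \]
Using $\coth(x) = \sgn(x)\cdot\frac{e^{2|x|}+1}{e^{2|x|}-1}$ for $x \neq 0$, a direct computation (treating $\xi > 0$ and $\xi < 0$ separately) collapses both cases to the non-negative expression
\[ m_\dl(\xi) = \frac{2|\xi|}{e^{2\dl|\xi|}-1}, \qquad \xi \neq 0, \]
and $m_\dl(0) = \dl^{-1}$ in the limit (which is irrelevant on $\T$ once we restrict to mean-zero functions, but presents no difficulty on $\R$ either).

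The key observation is then the scaling identity $m_\dl(\xi) = \dl^{-1}\phi(\dl|\xi|)$, where $\phi(y) = \frac{2y}{e^{2y}-1}$. The profile $\phi$ is smooth on $[0,\infty)$ with $\phi(0)=1$ and exponential decay at infinity; in particular, $\phi$ is uniformly bounded and, for each fixed $s \ge 0$, $\sup_{y\ge 0} y^s \phi(y) < \infty$. This is exactly the gain that yields the smoothing factor $\jb{\xi}^s$.

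With this in hand, I would prove the pointwise bound
\[ \jb{\xi}^s m_\dl(\xi) \les \dl^{-1}\bigl(1 + \dl^{-s}\bigr) \]
by a short case analysis. When $|\xi| \le 1$, one uses $\phi \le 1$ and $\jb{\xi}^s \sim 1$. When $|\xi| \ge 1$ and $\dl|\xi|\ge 1$, one uses $\phi(\dl|\xi|) \les_s (\dl|\xi|)^{-s}$ to obtain $\jb{\xi}^s m_\dl(\xi) \les \dl^{-s-1}$, which is controlled by $\dl^{-1}$ for $\dl\ge 1$ and by $\dl^{-1}\dl^{-s}$ for $\dl \le 1$. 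When $|\xi| \ge 1$ and $\dl|\xi|\le 1$ (forcing $\dl\le 1$), one just uses $\phi \le 1$ together with $\jb{\xi}^s \les \dl^{-s}$.

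Applying Plancherel on $\M$ then gives \eqref{smooth0}. For the second conclusion, the uniform bound for $\dl\ge 1$ is immediate since $1+\dl^{-s} \les 1$ when $s\ge 0$ and $\dl \ge 1$. There is no real obstacle here beyond the bookkeeping in the case split; the whole argument ultimately rests on the elementary fact that the symbol $m_\dl$ is a rescaling of a single exponentially-decaying profile, which exhibits arbitrary polynomial decay at high frequency.
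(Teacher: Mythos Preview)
Your proof is correct and follows essentially the same route as the paper: compute the symbol of $\Qdl$, obtain a pointwise bound of the form $\jb{\xi}^s m_\dl(\xi) \les \dl^{-1}(1+\dl^{-s})$, and conclude by Plancherel. The only cosmetic difference is in how the pointwise bound is organised: you introduce the rescaled profile $\phi(y) = \tfrac{2y}{e^{2y}-1}$ and split into three regimes ($|\xi|\le 1$; $|\xi|\ge 1$, $\dl|\xi|\ge 1$; $|\xi|\ge 1$, $\dl|\xi|\le 1$), whereas the paper compresses the entire argument into the single elementary inequality $x^{s+1} \le C_s(e^{2x}-1)$ for $x\ge 0$, applied with $x = \dl|n|$. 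Your version is slightly more explicit about where the two powers $\dl^{-1}$ and $\dl^{-s-1}$ arise (and works verbatim on both $\R$ and $\T$), while the paper's is more compact; neither buys anything the other does not.
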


\begin{proof}
With a slight abuse of notation, 
let $\ft \Qdl(n)$ denotes the multiplier 
for the operator $\Qdl$:
\begin{align}
\ft \Qdl(n) = n \big(\coth (\dl n) - \sgn (n)\big).
\label{smooth1}
\end{align}

\noi
From 
\eqref{OP1a}, we have
\begin{align}
\coth (\dl n) - \sgn (n) = \sgn(n) \cdot \frac 2{e^{2|\dl n|} - 1}
\label{smooth2}
\end{align}

\noi
for $n \ne 0$.
Then, 
by noting that $x^{s+1} \le C_s( e^{2x} - 1)$ for any $x \ge  0$, 
it follows from~\eqref{smooth1} and~\eqref{smooth2} that 
\begin{align*}
\sup_{n \in \Z \setminus \{0\}}
\jb{n}^s  |\ft \Qdl(n)|
\les \dl^{-1}(1+ \dl^{-s}), 
\end{align*}

\noi
from which \eqref{smooth0} follows.
\end{proof}

We now recall the following fractional Leibniz rule in the periodic setting;
see \cite{BOZ}.
See also 
 \cite[Lemma~9.A.2]{IK2} and \cite[Lemma~3.4]{GKO}.

\begin{lemma} \label{LEM:prod1}
Let $s > 0$ and   $1 <  p_j,q_j,r \le \infty$, $j=1,2$, such that $\frac{1}{r}=\frac{1}{p_j}+\frac{1}{q_j}$.
Then, we have 
\begin{align}
\|J^s(fg)\|_{L^r(\T)}\lesssim\| J^s f\|_{L^{p_1}(\T)} \|g\|_{L^{q_1}(\T)}+ \|f\|_{L^{p_2}(\T)} 
\|  J^s g\|_{L^{q_2}(\T)}, 
\label{prod1}
\end{align}

\noi
where $J^s = \jb{\dx}^s$ denotes the Bessel potential of order $-s$.

\end{lemma}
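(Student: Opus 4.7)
The plan is to prove (2.4) by Bony's paraproduct decomposition adapted to $\T$, using the Littlewood-Paley projectors $\Q_j$ introduced in Section~2. Writing $f = \sum_{j\ge 0}\Q_j f$ and $g = \sum_{j'\ge 0}\Q_{j'}g$, split the product into three pieces
\[
fg = \Pi_{\lo\hi}(f,g) + \Pi_{\hi\lo}(f,g) + \Pi_{\hi\hi}(f,g),
\]
where $\Pi_{\lo\hi}$ collects the pairs with $j' \ge j + 2$, $\Pi_{\hi\lo}$ those with $j \ge j' + 2$, and $\Pi_{\hi\hi}$ the resonant pairs with $|j-j'|\le 1$. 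The main harmonic-analytic input on $\T$ is the Littlewood-Paley square function characterization of $L^p(\T)$ for $1 < p < \infty$, which follows from the periodic Mikhlin-H\"ormander multiplier theorem applied to $\{\Q_j\}$ together with standard Khintchine arguments.

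For $\Pi_{\lo\hi}$, each summand $\bigl(\sum_{j'<j-1}\Q_{j'}f\bigr)\cdot\Q_j g$ is Fourier-supported in an annulus of scale $\sim 2^j$, so $J^s$ acts as multiplication by a symbol of size $\sim 2^{js}$ on it; transferring this factor onto $\Q_j g$ and applying H\"older with exponents $(p_2,q_2)$ at each dyadic level, followed by the square-function bound on $L^r(\T)$, yields
\[
\|J^s\Pi_{\lo\hi}(f,g)\|_{L^r} \les \|f\|_{L^{p_2}}\|J^s g\|_{L^{q_2}}.
\]
The symmetric argument for $\Pi_{\hi\lo}$, with exponents $(p_1,q_1)$, produces $\|J^s f\|_{L^{p_1}}\|g\|_{L^{q_1}}$, accounting for both terms on the right-hand side of~(2.4).

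The main obstacle is the resonant term $\Pi_{\hi\hi}(f,g) = \sum_{|j-j'|\le 1}\Q_j f\cdot\Q_{j'}g$, whose dyadic piece at scale $j$ carries output frequencies of all scales $\lesssim 2^j$ rather than just $\sim 2^j$. I would decompose further as $J^s\Pi_{\hi\hi} = \sum_{k\ge 0}\Q_k J^s\Pi_{\hi\hi}$; the frequency support forces $k \le j + O(1)$ in each summand $\Q_k(\Q_j f\cdot\wt\Q_j g)$, and $\jb{D}^s \Q_k$ acts as $\sim 2^{ks} = 2^{(k-j)s}\cdot 2^{js}$. The hypothesis $s > 0$ is crucially used here: it makes the geometric tail $\sum_{k\le j+O(1)} 2^{(k-j)s}$ convergent, after which placing $2^{js}$ on either dyadic factor and applying H\"older with either $(p_1,q_1)$ or $(p_2,q_2)$, followed by the square function, closes the bound by either term on the right of~(2.4). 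Because all exponents exceed $1$, no $H^1$/BMO substitute is needed; the periodic multiplier machinery suffices, and the argument is the standard Coifman-Meyer/paraproduct analysis specialized to $\T$, as in BOZ, IK2, and GKO.
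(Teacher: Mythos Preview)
The paper does not prove Lemma~\ref{LEM:prod1}; it is stated as a known result with citations to \cite{BOZ}, \cite[Lemma~9.A.2]{IK2}, and \cite[Lemma~3.4]{GKO}. Your paraproduct sketch is precisely the standard argument one finds in those references, so in that sense your approach and the paper's ``approach'' (deferring to the literature) agree.

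One point deserves more care than you give it. Your argument leans on the Littlewood--Paley square-function characterization of $L^p(\T)$, which you correctly note is valid only for $1<p<\infty$. But the lemma allows $p_j,q_j,r=\infty$, and in the paper's applications (e.g.\ \eqref{MP2d}) the case $q_j=\infty$ actually occurs. For the paraproduct piece $\Pi_{\lo\hi}(f,g)$ with $q_2=\infty$, after pulling out $Mf$ you are left with $\big\|(\sum_j 2^{2js}|\Q_j g|^2)^{1/2}\big\|_{L^\infty}$, which is a Triebel--Lizorkin $F^s_{\infty,2}$ quantity and is \emph{not} controlled by $\|J^s g\|_{L^\infty}$ in general. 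The fix is not hard---one can instead use the pointwise bound $2^{js}|\Q_j g|\les M(J^s g)$ together with a vector-valued maximal inequality placed on the $f$-side, or argue via $\sup_j 2^{js}\|\Q_j g\|_{L^\infty}\les\|J^s g\|_{L^\infty}$ after reorganizing the sum---but your final sentence (``no $H^1$/BMO substitute is needed; the periodic multiplier machinery suffices'') glosses over this, and the endpoint $r=\infty$ (forcing $p_j=q_j=\infty$) needs separate treatment altogether. The references you cite, in particular \cite{BOZ}, do handle these endpoints, so the result is correct as stated; just be aware that the square-function route as you wrote it does not cover the full range in one stroke.
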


Note that the estimate \eqref{prod1} holds for $\frac 12 \le r \le 1$
as well 
(with additional assumptions such as  $s > \frac 1r -1$ or $s \in 2\N$);
see \cite{BOZ}.

\medskip

We conclude this section by recalling the basic property 
of the translation operator $\tau$ defined by 
\begin{align}
\tau_h(u) (t, x) = \tau (h, u) (t, x) := u(t, x+ ht).
\label{trans1}
\end{align}

\begin{lemma}\label{LEM:trans}

Let $T > 0$ and $s\ge 0$.
Then, $\tau$ defined in \eqref{trans1}
is continuous from 
$\R \times C([0, T]; H^s(\T))$
into $C([0, T]; H^s(\T))$, 
but is not uniformly continuous
in every neighborhood of the origin $(0, 0)$.
For fixed $h \in \R$, 
$\tau_h$ is an isometry on
$C([0, T]; H^s(\T))$
(in particular, Lipschitz continuous).

\end{lemma}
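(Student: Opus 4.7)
The plan is to address the three claims in the order stated. The isometry statement for fixed $h$ is immediate: spatial translation is an isometry of $H^s(\T)$ since, on the Fourier side, it multiplies each coefficient $\ft u(n)$ by the unit-modulus factor $e^{inht}$. Applying this pointwise in $t$ and taking the supremum over $t \in [0,T]$ yields $\|\tau_h u\|_{C_T H^s_x} = \|u\|_{C_T H^s_x}$; since $\tau_h$ is linear, this is equivalent to $\|\tau_h u - \tau_h v\|_{C_T H^s_x} = \|u - v\|_{C_T H^s_x}$, i.e.\ Lipschitz (indeed, isometric) continuity of $\tau_h$.

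For joint continuity at a point $(h_0, u_0)$, the plan is to split, via the triangle inequality and the isometry above,
\begin{equation*}
\|\tau(h, u) - \tau(h_0, u_0)\|_{C_T H^s_x} \le \|u - u_0\|_{C_T H^s_x} + \|\tau(h, u_0) - \tau(h_0, u_0)\|_{C_T H^s_x}.
\end{equation*}
Only the second term requires work. The trajectory $K := \{u_0(t, \cdot) : t \in [0, T]\}$ is compact in $H^s(\T)$ as the continuous image of the compact interval $[0,T]$. Translations form a strongly continuous one-parameter group on $H^s(\T)$, so the map $(a, f) \mapsto f(\cdot + a)$ is jointly continuous from $\R \times H^s(\T)$ into $H^s(\T)$, and hence uniformly continuous on any compact product $[-R, R] \times K$. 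Choosing $R > T(|h_0| + 1)$ and using $|ht - h_0 t| \le T|h - h_0|$, this yields $\sup_{t \in [0,T]} \|u_0(t, \cdot + ht) - u_0(t, \cdot + h_0 t)\|_{H^s} \to 0$ as $h \to h_0$, concluding the argument.

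For the failure of uniform continuity in a neighborhood of $(0,0)$, the plan is to exhibit, in any prescribed neighborhood $V = \{|h| < r\} \times \{\|u\|_{C_T H^s_x} < r\}$, two sequences whose distance in the domain vanishes while the distance between their images stays bounded below. Fixing $c \in (0, r)$, set
\begin{equation*}
u_n(t, x) := c \jb{n}^{-s} e^{inx}, \qquad h_n := 0, \qquad h_n' := 1/n.
\end{equation*}
Then $\|u_n\|_{C_T H^s_x} = c < r$ and $|h_n|, |h_n'| \le 1/n < r$ for $n$ large, so both $(h_n, u_n)$ and $(h_n', u_n)$ lie in $V$, at distance $1/n \to 0$. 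A direct computation gives
\begin{equation*}
\tau(h_n', u_n)(t, x) - \tau(h_n, u_n)(t, x) = c \jb{n}^{-s} (e^{it} - 1) e^{inx},
\end{equation*}
whose $H^s_x$-norm at time $t$ equals $c |e^{it} - 1|$, so the supremum over $t \in [0,T]$ is a positive constant independent of $n$, ruling out uniform continuity.

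The only mildly subtle step is the second one: the shift parameter enters the function through the time-dependent quantity $h\cdot t$, so pointwise-in-$t$ strong continuity of translations on $H^s(\T)$ is not enough, and the compactness of the trajectory $K$ is precisely what upgrades this to the uniform-in-$t$ convergence needed for convergence in $C_T H^s_x$.
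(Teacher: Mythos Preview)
Your proof is correct. The paper does not actually prove this lemma: it refers to \cite[Propositions~3.2.1 and~3.2.2]{Herr} for the continuity and non-uniform-continuity claims, and gives precisely your one-line justification for the isometry of $\tau_h$. Your argument is therefore a self-contained substitute for a citation rather than a different approach. The compactness-of-trajectory device for upgrading strong continuity of translations to uniform-in-$t$ convergence is the standard way to handle the $h$-variable, and your high-frequency sequence $u_n = c\jb{n}^{-s} e^{inx}$ with shifts $h_n' = 1/n$ is the expected counterexample. One cosmetic point: since the paper works with real-valued functions, you may want to replace $e^{inx}$ by $\cos(nx)$; the computation is identical.
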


See 
\cite[Propositions 3.2.1 and 3.2.2]{Herr}
for the first two claims.
The second claim is clear from 
$\| \tau_h u(t) - \tau_h v(t)\|_{H^s_x}
=\|  u(t) -  v(t)\|_{H^s_x}$
for any $t \in \R$.

\section{Gauge transform and nonlinear estimates}
\label{SEC:gauge}

In this section, by viewing ILW \eqref{ILW1} as a perturbation 
of BO \eqref{BO1}, 
we first apply the gauge transform for BO
to ILW.
We then follow the argument in \cite{MP}
and establish a priori estimates.

\subsection{Gauge transform}
By applying the Galilean transform 
\begin{align}
v(t, x) = \tau_{\dl^{-1}}(u)(t, x) = u(t, x + \dl^{-1}t),
\label{gauge0} 
\end{align}

\noi
where $\tau_{\dl^{-1}} = \tau(\dl^{-1}, \,\cdot\,)$ is as in \eqref{trans1}, 
we can rewrite \eqref{ILW1}
as the following renormalized ILW:
\begin{align}
\dt v -
 \Tdl \dx^2 v =  \dx (v^2)    
\label{ILW2}
\end{align}

\noi
with $v|_{t = 0} = u_0$, 
where $\Tdl$ 
is as in \eqref{OP0a}.

Following the recent work \cite{IS}, 
our main strategy 
is to view 
\eqref{ILW2} as a perturbation of the BO equation \eqref{BO1} as follows:
\begin{align}
\dt v - \H \dx^2 v = \Qdl\dx v +  \dx (v^2), 
\label{ILW3}
\end{align}

\noi
where $\Qdl$ is as in \eqref{OP3}, 
and
apply the gauge transform for the BO equation \eqref{BO1}
(see \eqref{gauge4} below)
to \eqref{ILW3}. 
Lemma \ref{LEM:smooth}
shows that 
$\Qdl$ has a smoothing property of any order,
 which  allows
us to indeed view the first term 
on the right-hand side of \eqref{ILW3} 
as a  perturbation
in a suitable sense.

In a seminal work \cite{TAO04}, 
Tao introduced a gauge transform, analogous
to the Hopf-Cole transform for the heat equation, 
in studying low regularity well-posedness of BO \eqref{BO1}.
Let us now recall
the BO gauge transform in the periodic setting  \cite{Moli2, MP}
and apply it to \eqref{ILW3}.
Let $F = F[v]$ be the mean-zero primitive of $v$ defined by 
\begin{align}
F = F[v] =  \dx^{-1} \P_{\ne 0} v, 
\label{gauge1}
\end{align}

\noi
where 
$\dx^{-1}$ denotes
the Fourier multiplier operator with 
symbol $\ind_{\{n\ne 0\}} \cdot (in)^{-1}$
and 
$\P_{\ne 0}$ denotes the
projection onto the non-zero Fourier modes.
In particular, 
for a mean-zero function $v$, we have
\begin{align}
v = \P_{\ne 0} v =  \dx F.
\label{gauge2}
\end{align}

\noi
By noting
$\dx^{-1} \dx 
= \P_{\ne 0}$, 
it is easy to see that, 
if $v$ is a solution to \eqref{ILW3}, 
then $F = F[v]$ satisfies the following equation:
\begin{align}
\dt F - \H \dx^2 F = \Qdl v + v^2 - \P_0(v^2), 
\label{ILW4}
\end{align}

\noi
where $\P_0 = \Id - \P_{\ne 0}$
denotes the projection onto the zeroth Fourier coefficient.

\begin{remark}\label{REM:diff}\rm
For $j = 1, 2$, let  $v_j$ be a function  $C([0, T]; L^2(\T))$
such that $\ft v_j(t, 0) = 0$ for any $t \in [0, T]$.
Then, by letting $F_j$ be the mean-zero primitive of $v_j$
defined by $F_j = \dx^{-1} \P_{\ne 0} v_j$, 
it follows from 
Bernstein's inequality that 
\begin{align}
\| F_1(0) - F_2(0)\|_{L^\infty} \les \| v_1 (0) - v_2(0)\|_{L^2}
\label{diff1}
\end{align}

\noi
and 
\begin{align}
\| F_1- F_2\|_{L^\infty_{T, x}} \les \| v_1 - v_2\|_{L^\infty_T L^2_x}.
\label{diff2}
\end{align}

\noi
These bounds allow us to prove local Lipschitz 
continuity of the solution map to \eqref{ILW2}
in $H^s_0(\T)$.
We point out that the bounds \eqref{diff1} and \eqref{diff2}
do not hold on the real line 
due to the low-frequency issue.
In order to overcome this difficulty in the real line case, 
an extra assumption on the low-frequency part of initial data
was imposed in 
Section 4 of \cite{MP}, 
namely, 
\begin{align}
\P_\LO v_1(0) = \P_\LO v_2(0), 
\label{diff3}
\end{align}

\noi
See 
\cite[Lemma 4.1]{MP}, 
where the bounds \eqref{diff1} and \eqref{diff2}
are established in  the real line case under the extra assumption
\eqref{diff3}.

\end{remark}

We now introduce the gauge transform (for the periodic BO equation):\footnote{In the current periodic setting,
it is possible to define a gauge transform with $\P_+$ instead of $\P_{+,  \hi}$
(as in \cite{Moli1, Moli2})
and carry out the analysis presented in this paper.
We, however, keep the definition of the gauge transform \eqref{gauge4}
with $\P_{+,  \hi}$ so that it is easier to compare the discussion in this paper
with the presentation in \cite{MP}.
}
\begin{align}
W = \P_{+,  \hi}(e^{i F}).
\label{gauge4}
\end{align}

\noi
By noting 
\begin{align*}
\Id = \P_{+, \hi} + \P_{\lo} + \P_{-, \hi}
\qquad \text{and}
\qquad 
 \P_{+, \hi}(\P_{-, \hi} f \cdot \P_- g)  = 0, 
\end{align*}

\noi
 a direction computation with 
\eqref{gauge4},  \eqref{ILW4},  and \eqref{gauge2}
 yields
\begin{align}
\begin{split}
\dt W - \H \dx^2 W 
& = \dt W + i \dx^2 W\\
& = i \P_{+, \hi}\big(e^{iF} (\dt F + i \dx^2F - (\dx F)^2)\big)\\
& = - 2  \P_{+, \hi}(W \P_- \dx v)
- 2  \P_{+, \hi}(\P_\lo e^{iF} \P_- \dx v) \\
& \quad 
+ i \P_{+, \hi}(e^{iF}  \Qdl v) - i  \P_0( v^2) W. 
\end{split}
\label{ILW5}
\end{align}

%
%
\noi
Finally, we define $w = w(v)$ by setting
\begin{align}
w = \dx W = 
\dx \P_{+,  \hi}(e^{i F})
= 
i \P_{+,  \hi}(e^{i F} v).
\label{gauge6}
\end{align}

\noi
By differentiating \eqref{ILW5}, 
we then obtain
\begin{align}
\begin{split}
\dt w - \H \dx^2 w 
& = - 2 \dx \P_{+, \hi}(W \P_- \dx v)
- 2 \dx \P_{+, \hi}(\P_\lo e^{iF} \P_- \dx v) \\
& \quad 
+ i \dx\P_{+, \hi}(e^{iF}  \Qdl v) - i  \P_0( v^2) w\\
& =: \NN_\dl(w, v).
\end{split}
\label{ILW6}
\end{align}

\noi
As compared to \cite[(3-4)]{MP} (modulo
constants), 
we have two extra terms 
on the right-hand side of \eqref{ILW6}.
The third term appears from the difference between
the ILW and BO equations, 
whereas the fourth term appears due to the current periodic setting.

Let us conclude this discussion by 
expressing  $v$ 
in terms of  the gauged function~$w = w(v)$.
From \eqref{gauge2} and \eqref{gauge6}
with 
we can write $v$ as
\begin{align*}
\begin{split}
v & = \dx F = e^{-i F} \cdot e^{iF} \dx F
= - i e^{-i F} \cdot \dx e^{iF}\\
& = - i e^{-i F}  w  
+  e^{-i F} \P_\lo (e^{iF}  v)
-i   e^{-i F}\dx \P_{-, \hi}  e^{iF} .
\end{split}
\end{align*}

\noi
By applying $\P_{+, \HI}$, we then obtain
\begin{align}
\begin{split}
\P_{+, \HI} v 
& = - i \P_{+, \HI}(e^{-i F}  w  )
+  \P_{+, \HI}\big(\P_{+, \hi} e^{-i F} \cdot \P_\lo (e^{iF}  v)\big)\\
& \quad 
-i  \P_{+, \HI}( \P_{+, \HI}e^{-i F} \cdot \dx \P_{-, \hi} e^{iF}), 
\end{split}
\label{ILW8}
\end{align}

\noi
where we have used
$\P_{+, \HI}\big((\Id - \P_{+, \hi})f\cdot  \P_\lo g\big) 
= \P_{+, \HI}\big( (\Id - \P_{+, \HI}) f\cdot \P_{-, \hi} f\big)
= 0$.

\subsection{Product estimates}

We first recall the following product estimate;
see \cite[Lemma~3.2]{Moli1}.

\begin{lemma}\label{LEM:prod3}
Let $s, s_1, s_2  \ge 0$ and $1 < q, q_1, q_2 < \infty$
such that $s_1 \ge s$, $1+s = s_1 + s_2$, and 
$\frac 1q = \frac 1{q_1} + \frac 1{q_2}$.
Then, we have 
\begin{align*}
\| D^s \P_+(f \P_- \dx g) \|_{L^q}
\les \| D^{s_1} f\|_{L^{q_1}} \| D^{s_2} g \|_{L^{q_2}}.
\end{align*}

\end{lemma}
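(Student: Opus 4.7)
The key structural observation is a strict high-low Fourier support restriction inside the operator $\P_+(f\cdot \P_-\dx g)$. Writing
\begin{align*}
\F[\P_+(f\,\P_-\dx g)](n)
= \ind_{\{n\ge 1\}} \sum_{\substack{n_1+n_2 = n\\ n_2 \le -1}} \ft f(n_1)\cdot (i n_2)\,\ft g(n_2),
\end{align*}
the conditions $n \ge 1$ and $n_2 \le -1$ force $n_1 = n - n_2 \ge 1 + |n_2|$, so $|n_1| > |n_2|$ and $|n| \le n_1 = |n_1|$. The symbol $|n|^s |n_2|$ of $D^s\P_+(\,\cdot\, \P_-\dx g)$ is therefore pointwise dominated by $|n_1|^{s_1}|n_2|^{s_2}$: combine $|n|^s \le |n_1|^s$ with the identity $s_1 - s = 1 - s_2$ (which follows from $s_1 + s_2 = 1 + s$) and the observation that $s_1 \ge s$ forces $s_2 \le 1$, so that $|n_2|^{1-s_2} \le |n_1|^{1-s_2} = |n_1|^{s_1 - s}$.

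To convert this symbol bound into the claimed $L^q$-estimate, I would perform a Littlewood--Paley decomposition $f = \sum_{j\ge 0}\Q_j f$ and $g = \sum_{k\ge 0}\Q_k g$ and analyze each dyadic piece $\P_+(\Q_j f \cdot \P_-\dx \Q_k g)$. The Fourier analysis above shows that this block vanishes unless $k \le j + O(1)$, in which case the output is supported in $\{|n| \lesssim 2^j\}$. Using Bernstein's inequality to absorb $D^s$ as the scalar factor $2^{js}$ and the $L^p$-boundedness of the Riesz projections $\P_\pm$ for $1 < p < \infty$, each block satisfies
\begin{align*}
\| D^s \P_+(\Q_j f \cdot \P_-\dx\Q_k g)\|_{L^q}
& \les 2^{js+k}\, \| \Q_j f\|_{L^{q_1}} \| \Q_k g\|_{L^{q_2}}\\
& = 2^{(1-s_2)(k-j)}\cdot 2^{j s_1}\| \Q_j f\|_{L^{q_1}}\cdot 2^{k s_2}\| \Q_k g\|_{L^{q_2}},
\end{align*}
and the extra factor $2^{(1-s_2)(k-j)}$ is uniformly bounded thanks to $k - j \lesssim 1$ and $1 - s_2 \ge 0$.

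The last step is to sum over $j, k$. I would first sum in $k \le j + O(1)$ by Cauchy--Schwarz against the geometric tail in $k - j$, and then sum in $j$ using the Littlewood--Paley square function characterization of $\| D^{s_i}\,\cdot\,\|_{L^{q_i}}$, valid for $1 < q_i < \infty$ and $s_i \ge 0$. The low-frequency endpoints $j=0$ or $k=0$ are handled separately by Bernstein, since the corresponding $L^{q_i}$-norms are dominated by $\| D^{s_i}(\,\cdot\,)\|_{L^{q_i}}$. The only genuinely delicate step is the symbol reduction in the first paragraph; once that is in place, the rest is a routine Littlewood--Paley summation, and I expect this direct Fourier-multiplier argument to be more transparent than invoking the Coifman--Meyer paraproduct theorem.
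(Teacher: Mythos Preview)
The paper does not prove this lemma; it is quoted from \cite[Lemma~3.2]{Moli1}. Your frequency-support observation is exactly the mechanism behind the estimate, and the pointwise symbol inequality $|n|^s|n_2|\le |n_1|^{s_1}|n_2|^{s_2}$ is correct.

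The summation step, however, does not close as written. After the triangle inequality over $(j,k)$ and Young's (or Cauchy--Schwarz) against the weight $2^{(1-s_2)(k-j)}$, the best you obtain is
\[
\Big(\sum_j 2^{2js_1}\|\Q_j f\|_{L^{q_1}}^2\Big)^{1/2}
\Big(\sum_k 2^{2ks_2}\|\Q_k g\|_{L^{q_2}}^2\Big)^{1/2},
\]
a product of Besov-type seminorms in which the $\ell^2$-sum sits \emph{outside} the $L^{q_i}$-norm. The Littlewood--Paley characterization of $\|D^{s_i}\cdot\|_{L^{q_i}}$ places the $\ell^2$-sum \emph{inside} the $L^{q_i}$-norm, and by Minkowski these two quantities are ordered the right way only when $q_i\ge 2$; for $1<q_i<2$ the inequality reverses. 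Thus your argument proves the lemma only in the range $q_1,q_2\ge 2$---which, incidentally, covers the paper's sole application with $q_1=q_2=8$---but not in the stated generality. A second, smaller gap: at the endpoint $s_1=s$ one has $s_2=1$, so $2^{(1-s_2)(k-j)}\equiv 1$ and there is no geometric tail to sum against.

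To reach the full range $1<q_i<\infty$ one has to avoid the initial triangle inequality and apply the square function to the \emph{output} first: split into a low--high paraproduct piece ($k\le j-3$, output localized at frequency $\sim 2^j$ and hence almost orthogonal in $j$) and a diagonal high--high piece ($|j-k|\le 2$), bound each pointwise by maximal functions of the Littlewood--Paley pieces of $D^{s_1}f$ and $D^{s_2}g$, and only then take $L^q$ and apply H\"older together with the Fefferman--Stein vector-valued maximal inequality.
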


Next, we
recall the following lemma 
on  the multiplication with  $e^{\pm iF}$ in Sobolev spaces;
see
 \cite[Lemma 2.7]{MP}.
 See also  \cite[Lemma 3.1]{Moli1}.

\begin{lemma} \label{LEM:prod2}
Let $2 \le q <\infty$
 and $0 \le s \le \frac 12$.
For $j = 1, 2$, 
let $F_j$ denote the mean-zero primitive of
a mean-zero real-valued function $f_j$ on $\T$
such that $\dx F_j = f_j$.
Then, we have 
\begin{align*}
\|J^s (e^{\pm i F_j} g)\|_{L^q}
\les 
\big(1+\|f_j\|_{L^2}\big)\|J^{s}g\|_{L^q} 
\end{align*}

\noi
and
\begin{align*}
& \big\|J^s \big((e^{\pm i F_1} - e^{\pm i F_2}) g\big)\big\|_{L^q}\\
& \quad
\les 
\Big(\|f_1 - f_2\|_{L^2}
+ \|e^{\pm i F_1} - e^{\pm i F_2}\|_{L^\infty}
\big(1+\|f_1\|_{L^2}\big)\Big)
\|J^{s}g\|_{L^q}.
\end{align*}

\end{lemma}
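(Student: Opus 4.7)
The plan is to exploit two elementary facts. First, since $F_j$ is real-valued, $|e^{\pm iF_j}| \equiv 1$, so $\|e^{\pm iF_j}\|_{L^\infty(\T)} \le 1$. Second, a direct chain-rule computation gives
\begin{align*}
\dx e^{\pm iF_j} = \pm i f_j e^{\pm iF_j},
\end{align*}
which, combined with $\|e^{\pm iF_j}\|_{L^2(\T)} \les 1$ on the compact torus, yields the key $H^1$-bound $\|e^{\pm iF_j}\|_{H^1(\T)} \les 1 + \|f_j\|_{L^2}$. Both stated estimates will then follow by applying the fractional Leibniz rule (Lemma~\ref{LEM:prod1}) and matching the resulting pieces against Sobolev embeddings on $\T$. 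I will carry out the analysis only at the endpoint $s = \tfrac12$; the range $0 \le s < \tfrac12$ follows by complex interpolation of the multiplication operator $g \mapsto e^{\pm iF_j} g$ between $L^q \to L^q$ (operator norm $\le 1$) and $W^{1/2,q} \to W^{1/2,q}$ (operator norm $\les 1+\|f_j\|_{L^2}$), using the pointwise inequality $(1+\|f_j\|_{L^2})^{2s} \le 1 + \|f_j\|_{L^2}$ valid for $s \in [0, \tfrac12]$.

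For the first inequality at $s = \tfrac12$, I apply Lemma~\ref{LEM:prod1} with $r = q$ and exponents $(p_1, q_1) = (2q, 2q)$, $(p_2, q_2) = (\infty, q)$ to get
\begin{align*}
\|J^{1/2}(e^{\pm iF_j} g)\|_{L^q} \les \|J^{1/2} e^{\pm iF_j}\|_{L^{2q}} \|g\|_{L^{2q}} + \|e^{\pm iF_j}\|_{L^\infty} \|J^{1/2} g\|_{L^q}.
\end{align*}
The second summand is dominated by $\|J^{1/2} g\|_{L^q}$ using the $L^\infty$-bound. For the first, the Sobolev embedding $W^{1/2, q}(\T) \embeds L^{2q}(\T)$, valid whenever $\tfrac12 \ge \tfrac{1}{2q}$ (thus for all $q \ge 1$), gives $\|g\|_{L^{2q}} \les \|J^{1/2} g\|_{L^q}$; and the embedding $H^1(\T) \embeds W^{1/2, 2q}(\T)$, valid for $q < \infty$, combined with the $H^1$-bound above, yields $\|J^{1/2} e^{\pm iF_j}\|_{L^{2q}} \les 1 + \|f_j\|_{L^2}$. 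Multiplication of the two factors then gives the claim at $s = \tfrac12$, and interpolation completes the argument.

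The difference estimate proceeds along the same route once the $H^1$-bound is upgraded. Writing
\begin{align*}
\dx \bigl(e^{\pm iF_1} - e^{\pm iF_2}\bigr) = \pm i f_1 \bigl(e^{\pm iF_1} - e^{\pm iF_2}\bigr) \pm i (f_1 - f_2) e^{\pm iF_2}
\end{align*}
and estimating the $L^2$-part of the $H^1$-norm by $\|h\|_{L^2(\T)} \les \|h\|_{L^\infty(\T)}$, we obtain
\begin{align*}
\|e^{\pm iF_1} - e^{\pm iF_2}\|_{H^1(\T)} \les \|f_1 - f_2\|_{L^2} + (1 + \|f_1\|_{L^2}) \|e^{\pm iF_1} - e^{\pm iF_2}\|_{L^\infty}.
\end{align*}
Substituting this into the argument of the previous paragraph (with $e^{\pm iF_j}$ replaced by $e^{\pm iF_1} - e^{\pm iF_2}$, and $1$ replaced by $\|e^{\pm iF_1} - e^{\pm iF_2}\|_{L^\infty}$ in the $L^\infty$-step), then interpolating between $s = 0$ and $s = \tfrac12$, gives the full range $0 \le s \le \tfrac12$. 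Note that at $s = 0$ the operator has norm bounded by $\|e^{\pm iF_1} - e^{\pm iF_2}\|_{L^\infty}$, which is controlled by the right-hand side of the claimed bound, so the interpolation produces only the stated factor.

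The main technical obstacle, and the reason the range is restricted to $s \le \tfrac12$, is that the $H^1$-bound is essentially the best possible using only $\|f_j\|_{L^2}$: higher regularity would bring in $\dx^2 e^{iF_j} = (-f_j^2 + i \dx f_j) e^{iF_j}$, which involves $\dx f_j$ and is not controllable from $\|f_j\|_{L^2}$ alone. Within the stated range, the remaining care is to verify that the chosen Sobolev exponents $(p_1, q_1) = (2q, 2q)$ satisfy the embedding conditions uniformly in $q \in [2, \infty)$, which holds since $\tfrac{1}{2q} \le \tfrac14 < \tfrac12$ throughout.
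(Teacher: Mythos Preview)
Your argument is correct. The key inputs --- the $L^\infty$-bound $|e^{\pm iF_j}|=1$ and the $H^1$-bound $\|e^{\pm iF_j}\|_{H^1}\les 1+\|f_j\|_{L^2}$ coming from $\dx e^{\pm iF_j}=\pm i f_j e^{\pm iF_j}$ --- are exactly the ones underlying the paper's proof as well, and your applications of Lemma~\ref{LEM:prod1}, the Sobolev embeddings $W^{1/2,q}(\T)\hookrightarrow L^{2q}(\T)$ and $H^1(\T)\hookrightarrow W^{1/2,2q}(\T)$ for $2\le q<\infty$, and the complex interpolation of the multiplication operator between $L^q$ and $W^{1/2,q}$ all check out.

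The organization, however, differs from the paper's. The paper follows \cite[Lemma~2.7]{MP}, which works \emph{directly at each fixed $s$} via a high--low frequency decomposition of the phase factor: one splits $e^{\pm iF_j}=\P_\lo e^{\pm iF_j}+\P_\hi e^{\pm iF_j}$, and the high-frequency piece is handled through the bound $\|D^{s+\frac12}\P_\hi e^{\pm iF_j}\|_{L^2}\les\|\dx e^{\pm iF_j}\|_{L^2}$, which is where the constraint $s\le\tfrac12$ enters. Your route instead proves only the endpoint $s=\tfrac12$ via a balanced Leibniz splitting $(2q,2q)$ and then interpolates down to $s=0$. Your approach is slightly more elementary and self-contained (no paraproduct-type decomposition needed), while the paper's approach has the minor advantage of giving the estimate at each $s$ without invoking interpolation of function spaces. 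Both identify the obstruction to $s>\tfrac12$ as the lack of control on $\dx f_j$, exactly as you note in your final paragraph.
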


In \cite[Lemma 2.7]{MP}, 
Lemma \ref{LEM:prod2} was proven  for $0 \le s \le \frac 1q$
but their proof easily extends to the range $0 \le s \le \frac 12$.
In \cite[the last line on p.\,372]{MP}, 
the condition $s = \frac 1q - \frac 1{q_1}$
was used to control 
$\| g \|_{L^{q_1}}$ by 
$\| J^s g \|_{L^{q}}$, 
which yielded the restriction $s \le \frac 1q$. 
We can, however, use Sobolev's inequality
with 
$s \ge  \frac 1q - \frac 1{q_1}$; see~\cite{BO}.
In repeating the proof of \cite[Lemma 2.7]{MP}, 
the restriction $s \le \frac 12$
appears in 
the bound 
$\| D^{s + \frac 12} \P_\hi e^{\pm i F_j}\|_{L^2}
\les
\| \dx e^{\pm i F_j}\|_{L^2}$
in \cite[p.\,373]{MP}.

\subsection{Nonlinear estimates}

We now state nonlinear estimates
which allow us to establish a priori bounds
on a solution $v$ to \eqref{ILW2}
and the associated gauged function $w$ in \eqref{gauge6}.
The following results follow
from slight modifications 
of the corresponding results for the $\dl = \infty$ case;
see Propositions 3.2 and 3.4 in \cite{MP}.
The first lemma controls
a solution $v$ to \eqref{ILW2}
in terms of the gauged function $w$ defined in \eqref{gauge6}.

\begin{lemma}\label{LEM:MP1}
Let $0\le s \le 1$ and $0 \le \kk \le 1$.
Then, there exists $\ta > 0$ such that 
\begin{align}
\| v \|_{X^{s-\kk, \kk}(T)}
 \les T^\ta (1 + \dl^{-1}) \|v\|_{L^\infty_T H^s_x}
+  \|v\|_{L^4_{T, x}} \|J^s v\|_{L^4_{T, x}}
\label{MP1a}
\end{align}

\noi
for
any $0 < \dl \le \infty$, $0 < T \le 1$, and 
any smooth solution $v$ to \eqref{ILW2} on the time interval  $[0, T]$.
Moreover if $0 \le s \le \frac 12$, we have
\begin{align}
\begin{split}
\| J^s v \|_{L^p_T L^q_x}  
& \les
\| v(0) \|_{L^2}
+ \big(1 + 
 \|v\|_{L^\infty_T L^2_x}\big)
\|w\|_{Y^{s, \frac 12}(T)} \\
& \quad + \big(1 + 
 \|v\|_{L^\infty_T L^2_x}\big)
 \|v \|_{L^\infty_T L^2_x}
  \|v \|_{L^\infty_T H^s_x}
+  
T\dl^{-1}
 \|v \|_{L^\infty_T L^2_x}
\end{split}
\label{MP1b}
\end{align}

\noi
for $(p, q) = (\infty, 2)$ or $(4, 4)$, 
where $w = w(v)$ is as in \eqref{gauge6}.
%

\end{lemma}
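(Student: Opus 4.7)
\textbf{Plan for \eqref{MP1a}.} The plan is to exploit the interpolation inequality
\[ \|v\|_{X^{s-\kk,\kk}} \lesssim \|v\|_{L^2_t H^{s}_x} + \|(\dt - \H \dx^2)v\|_{L^2_t H^{s-1}_x}, \]
valid for $\kk \in [0,1]$, which follows from $\langle \tau - |n|n\rangle^\kk / \langle n\rangle^\kk \le 1 + \langle \tau - |n|n\rangle / \langle n\rangle$. After extending $v$ from $[0,T]$ to $\R$ in time via a cutoff in the definition \eqref{loc1}, we substitute the equation in its BO-perturbation form \eqref{ILW3}, so that $(\dt - \H \dx^2)v = \Qdl \dx v + \dx(v^2)$. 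For the linear piece, we use the pointwise bound $e^{2\dl|n|} - 1 \ge 2\dl|n|$ (improving on the blunt application of Lemma \ref{LEM:smooth}) to conclude $|\ft\Qdl(n)\cdot n| \lesssim \dl^{-1}\langle n\rangle$, yielding $\|\Qdl \dx v\|_{H^{s-1}} \lesssim \dl^{-1}\|v\|_{H^s}$ for every $s \ge 0$; integrating in $t$ gives $T^{1/2}\dl^{-1}\|v\|_{L^\infty_T H^s_x}$. The nonlinear piece $\dx(v^2)$ is estimated by the fractional Leibniz rule (Lemma \ref{LEM:prod1}) as $\|v\|_{L^4_{T,x}}\|J^s v\|_{L^4_{T,x}}$. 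Combining the two gives \eqref{MP1a} with $\ta = \tfrac12$.

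\textbf{Plan for \eqref{MP1b}.} The idea is to pass from $v$ to the gauged variable $w$ via the inversion identity \eqref{ILW8}. Decompose $v = \P_\LO v + \P_{+, \HI} v + \P_{-, \HI} v$, and observe that by the reality of $v$, $\P_{-, \HI} v = \overline{\P_{+, \HI} v}$, so only the first two contributions must be bounded. The low/mid-frequency part $\P_\LO v$ is controlled in $L^p_T L^q_x$ by $\|v(0)\|_{L^2}$ plus a time-integrated remainder using the equation for $\P_\LO v$; the $\Qdl\dx v$ forcing produces the $T\dl^{-1}\|v\|_{L^\infty_T L^2_x}$ term in \eqref{MP1b}. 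For $\P_{+, \HI} v$ we apply \eqref{ILW8} and bound each of the three paraproducts separately: (i) $\P_{+, \HI}(e^{-iF}w)$ is handled directly by Lemma \ref{LEM:prod2} in the form $(1 + \|v\|_{L^\infty_T L^2_x})\|J^s w\|_{L^p_T L^q_x}$, and $\|J^s w\|_{L^\infty_T L^2_x}$ and $\|J^s w\|_{L^4_{T,x}}$ are each controlled by $\|w\|_{Y^{s,1/2}(T)}$ via the embedding \eqref{loc2} and the periodic $L^4$-Strichartz estimate (Lemma \ref{LEM:L4}); (ii) the second term splits by Hölder into a high-frequency factor $\P_{+,\hi}e^{-iF}$, whose $H^s$-norm is bounded by $(1+\|v\|_{L^\infty_T L^2_x})\|v\|_{L^\infty_T H^s_x}$ (using $\dx e^{\pm iF} = \pm iv e^{\pm iF}$ and Lemma \ref{LEM:prod2}), and a low-frequency factor $\P_\lo(e^{iF}v)$, whose $L^\infty_x$-norm is controlled by $\|v\|_{L^2}$ via Bernstein combined again with Lemma \ref{LEM:prod2}; (iii) the third term is treated analogously using $\dx\P_{-, \hi}e^{iF} = \overline{\dx \P_{+, \hi}e^{-iF}}$, which identifies $\dx\P_{-, \hi}e^{iF}$ with a quantity of the same type as $w$ up to complex conjugation.

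\textbf{Main obstacle.} The technical heart of the argument is the paraproduct bookkeeping in \eqref{MP1b}: tracking the frequency supports carefully so that each of the three terms in \eqref{ILW8} is estimated with exactly one copy of $\|w\|_{Y^{s,1/2}(T)}$ or $\|v\|_{L^\infty_T H^s_x}$ on the right, and ensuring that the $J^s$-regularity is distributed correctly by Lemma \ref{LEM:prod2} (whose range $0 \le s \le \tfrac12$ dictates the restriction on $s$ in \eqref{MP1b}). The additional $\Qdl$ perturbation compared to the pure BO case in \cite{MP} is handled by the smoothing estimate Lemma \ref{LEM:smooth}, which shows $\Qdl\dx$ acts as a bounded operator on $H^s$ with operator norm $\lesssim \dl^{-1}$; the time integration of this linear contribution produces the prefactors $T^\ta(1+\dl^{-1})$ and $T\dl^{-1}$ appearing in \eqref{MP1a} and \eqref{MP1b}, respectively.
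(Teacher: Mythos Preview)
Your plan for \eqref{MP1a} is correct and matches the paper's argument essentially line for line; your direct multiplier bound $|\ft\Qdl(n)\cdot n|\lesssim \dl^{-1}\langle n\rangle$ is in fact a slightly cleaner route than invoking Lemma~\ref{LEM:smooth} with an extra $s$ of smoothing.

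For \eqref{MP1b}, your outline is also essentially the paper's: decompose $v$ via $\P_\LO$ and $\P_{+,\HI}$, treat the low part by Duhamel (picking up the $T\dl^{-1}\|v\|_{L^\infty_T L^2_x}$ term from $\Qdl\dx v$), and treat $\P_{+,\HI}v$ via \eqref{ILW8}. Items (i) and (ii) are handled exactly as you describe; the paper's treatment of $A_2$ (your (i)) is \eqref{MP1e}, and $A_3$ (your (ii)) is covered by \cite[(3-14)]{MP}, which does extend to $0\le s\le\tfrac12$.

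However, there is a genuine gap in your item (iii), the term $A_4=-i\P_{+,\HI}(\P_{+,\HI}e^{-iF}\cdot\dx\P_{-,\hi}e^{iF})$, precisely in the case $(p,q)=(4,4)$ with $\tfrac14<s\le\tfrac12$. Your identification $\dx\P_{-,\hi}e^{iF}=\overline{\dx\P_{+,\hi}e^{-iF}}$ is correct, but calling this ``a quantity of the same type as $w$'' is misleading: $w=\dx\P_{+,\hi}e^{iF}$ satisfies the gauge equation \eqref{ILW6}, whereas $\dx\P_{+,\hi}e^{-iF}$ does not, so there is no Bourgain-space or Strichartz control available for it. One is therefore forced to estimate both factors of $A_4$ using only $L^2$-based bounds on $e^{\pm iF}v$. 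A direct application of Lemma~\ref{LEM:prod3} with exponents $q_1,q_2$ satisfying $\tfrac1{q_1}+\tfrac1{q_2}=\tfrac14$, followed by Sobolev embedding into $L^2$, requires $s_1+\tfrac12-\tfrac1{q_1}\le 1$ and $s_2+\tfrac12-\tfrac1{q_2}\le 1$; since $s_1+s_2=1+s$, summing gives $1+s\le\tfrac54$, i.e.\ the argument closes only for $s\le\tfrac14$.

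To push to $\tfrac14<s\le\tfrac12$, the paper performs a genuine dyadic decomposition of $A_4$ (see \eqref{MP4a}--\eqref{MP4e}): one writes $\|J^sA_4\|_{L^4_x}$ via Littlewood--Paley, extracts the excess weight $2^{(s-\frac14)j}$, and uses the constraint $j\le j_1$ (coming from the frequency supports) to transfer this weight onto $\Q_{j_1}\P_+(e^{-iF}v)$. Two cases ($2^{j_1}\sim 2^j$ and $2^{j_1}\sim 2^{j_2}\gg 2^j$) are handled separately with Lemma~\ref{LEM:prod3} at exponent $L^8\times L^8\to L^4$ and Sobolev $H^1\hookrightarrow W^{5/8,8}$; the $\ell^2_{j_1}$-sum is then closed by \eqref{MP4d}, i.e.\ Lemma~\ref{LEM:prod2} applied to $\|e^{-iF}v\|_{H^{s-1/4}}$. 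This dyadic redistribution of the extra $s-\tfrac14$ derivatives is the missing ingredient in your sketch.
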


\begin{proof}
Proceeding as in the proof  of  Proposition 3.2 in \cite{MP}, 
we have 
\begin{align}
\| \wt v\|_{X^{s - \kk, \kk}} \les \| \dt v -  \H \dx^2 v\|_{L^2_T H^{s-1}_x}  
+ T^\ta \|v\|_{L^\infty_T H^s_x}
\label{MP1c}
\end{align}

\noi
for any $0 \le \kk \le 1$, 
where $\wt v(t) = \eta(t) v(t)$ is an extension 
of $v$ from $[0, T]$ to $\R$.
By substituting~\eqref{ILW3} to the first term on the right-hand side of \eqref{MP1c}
and applying Lemma \ref{LEM:smooth}
and 
the fractional Leibniz rule
(Lemma \ref{LEM:prod1}), we have
\begin{align}
\begin{split}
\| \dt v -  \H \dx^2 v\|_{L^2_T H^{s-1}_x}  
& \les \|\Qdl v\|_{L^2_T H^s_x}
+ \|v\|_{L^4_{T, x}} \|J^s v\|_{L^4_{T, x}}\\
& \les
T^\ta \dl^{-1}  \| v\|_{L^\infty_T H^s_x}
+  \|v\|_{L^4_{T, x}} \|J^s v\|_{L^4_{T, x}}.
\end{split}
\label{MP1d}
\end{align}

\noi
Hence, \eqref{MP1a} follows from \eqref{MP1c} and \eqref{MP1d}.

Note that
 the identity 
 \eqref{ILW8}
does not involve the depth parameter $\dl$ (or an equation)
and is the same as (3-6) in \cite{MP} (modulo constants).
Thus, provided that (i)~$(p, q) = (\infty, 2)$ 
and $0 \le s \le \frac 12$
or (ii)~$(p, q) = (4, 4)$
and $0 \le s \le \frac 14$, 
we can proceed as in  the proof of Proposition 3.2 in~\cite{MP}.
Note, however, that in~\cite{MP}, the low-frequency contribution
was estimated by studying 
the Duhamel formulation of the BO equation; see \cite[(3-17) and (3-18)]{MP}.
In our case, we instead need to
study the Duhamel formulation of \eqref{ILW3}, 
where 
the additional contribution as compared to \cite{MP} is estimated 
via  Lemma \ref{LEM:smooth} as
\begin{align}
\bigg\| J^s \int_0^t S(t-t') \P_{\LO} \Qdl \dx v (t') dt' \bigg\|_{L^p_T L^q_x}  
\les T\dl^{-1} \|v \|_{L^\infty_T L^2_x}.
\label{MP1dd}
\end{align}

\noi
This yields 
the second estimate~\eqref{MP1b}
except for the case
 $(p, q) = (4, 4)$
and $\frac 14 <  s \le \frac 12$.

It remains to consider the case
 $(p, q) = (4, 4)$
and $\frac 14 <  s \le \frac 12$.
In view of \eqref{ILW8}, 
we write $v = \P_+ v + \cj {\P_+ v}$ (recall that $v$ is real-valued) and 
\begin{align*}
\P_+ v & = 
\P_{+, \LO} v 
 - i \P_{+, \HI}(e^{-i F}  w  )
+  \P_{+, \HI}\big(\P_{+, \hi} e^{-i F} \cdot \P_\lo (e^{iF}  v)\big)\\
& \quad 
-i  \P_{+, \HI}( \P_{+, \HI}e^{-i F} \cdot \dx \P_{-, \hi} e^{iF})\\
& =: A_1 + A_2 + A_3 + A_4.
\end{align*}

\noi
Note that 
 the estimates
(3-18)  (see also \eqref{MP1dd} above) and (3-14) in \cite{MP}
on  $A_1$ and  $A_3$, respectively, 
also hold for $\frac 14 <  s \le \frac 12$.
As for $A_2$, by applying  Lemmas \ref{LEM:prod2}
and  \ref{LEM:L4}, we have 
\begin{align}
\begin{split}
\| J^s A_2\|_{L^4_{T, x}}
& = \| J^s \P_{+, \HI}(e^{-i F}  w  )\|_{L^4_{T, x}}
\les
\big( 1+ \|v\|_{L^\infty_T L^2_x} \big)
 \| J^s   w  \|_{L^4_{T, x}}\\
& \les
\big( 1+ \|v\|_{L^\infty_T L^2_x} \big)
 \|  w  \|_{Y^{s, \frac 12}(T)}
 \end{split}
 \label{MP1e}
\end{align}

\noi
extending  \cite[(3-13)]{MP} to $\frac 14 < s \le \frac 12$.

Next, we treat $A_4$.
In view of the frequency localizations, we have
\begin{align}
\begin{split}
A_4 = 
- i \sum_{j, j_1 = 3}^\infty
\sum_{j_2 = 1}^\infty
\ind_{j_1 \ge j,   j_2}\cdot 
  \Q_j
  \P_{+}( \Q_{j_1} \P_{+}e^{-i F} \cdot \Q_{j_2} \dx \P_{-} e^{iF}).
\end{split}
\label{MP4a}
\end{align}

\noi
In the following, we  work for fixed time $t$.
For simplicity of notations, 
we  suppress $t$-dependence.
By 
the Littlewood-Paley theorem and  Minkowski's integral inequality, we have 
\begin{align}
\begin{split}
  \| J^s A_4\|_{L^4_x}
&  \sim  \bigg\|\bigg(
\sum_{j = 3}^\infty
| \Q_j J^s \P_{+}( \P_{+, \HI}e^{-i F} \cdot \dx \P_{-, \hi} e^{iF})|^2\bigg)^\frac{1}{2}\bigg\|_{L^4_x}\\
&  \les  
\bigg(
\sum_{j = 3}^\infty2^{2(s-\frac 14) j}
\| \Q_j J^\frac 14 \P_{+}( \P_{+, \HI}e^{-i F} \cdot \dx \P_{-, \hi} e^{iF})\|_{L^4_x }^2\bigg)^\frac 12.
\end{split}
\label{MP4b}
\end{align}

\noi
We separately estimate the contributions to \eqref{MP4b}
by considering the following two cases:
$2^{j_1} \sim 2^j$
and  $2^{j_1} \sim 2^{j_2} \gg 2^j$, 
where $j_1$ and $j_2$ are as in \eqref{MP4a}.

\medskip

\noi
$\bullet$
{\bf Case 1:} $2^{j_1} \sim 2^j$.
\\
\indent
In this case, we have $|j_1 - j|\le 2$.
Then, proceeding as in 
\cite[(3-15)]{MP} with Lemma \ref{LEM:prod3}, 
Sobolev's inequality, and \eqref{gauge2}, we have 
\begin{align}
\begin{split}
\eqref{MP4b} 
& \les
\sum_{k = -2}^2
\bigg(
\sum_{j_1 = 3}^\infty2^{2(s-\frac 14) j_1}
\| \Q_{j_1+k} D^\frac 14 \P_{+}( \Q_{j_1} \P_{+}e^{-i F} \cdot \dx \P_{-, \hi} e^{iF})\|_{L^4_x }^2\bigg)^\frac 12
\\
& \les
\bigg(
\sum_{j_1 = 3}^\infty2^{2(s-\frac 14) j_1}
\|  D^\frac 58 \Q_{j_1} \P_{+}e^{-i F}\|_{L^8_x}^2
\bigg)^\frac 12
\|D^\frac 58 \P_{-, \hi} e^{iF}\|_{L^8_x }\\
& \les
\bigg(
\sum_{j_1 = 3}^\infty2^{2(s-\frac 14) j_1}
\|   \Q_{j_1}\dx \P_{+}e^{-i F}\|_{L^2_x}^2
\bigg)^\frac 12
\|\dx  \P_{-, \hi} e^{iF}\|_{L^2_x }\\
& \les 
\bigg(
\sum_{j_1 = 3}^\infty2^{2(s-\frac 14) j_1}
\|   \Q_{j_1} \P_{+}(e^{-i F}v)\|_{L^2_x}^2
\bigg)^\frac 12
\|v\|_{L^2_x }.
\end{split}
\label{MP4c}
\end{align}

\noi
By Lemma \ref{LEM:prod2}
(recall that $\frac 14 <  s \le \frac 12$), we have 
\begin{align}
\begin{split}
& \bigg(
\sum_{j_1 = 3}^\infty2^{2(s-\frac 14) j_1}
\|   \Q_{j_1} \P_{+}(e^{-i F}v)\|_{L^2_x}^2
\bigg)^\frac 12\\ 
& \hphantom{XXXXX}
 \les \|e^{-i F}v\|_{H^{s- \frac 14}_x}
\les \big(1 + \|v\|_{L^2_x}\big)\|v\|_{H^{s- \frac 14}_x}.
\end{split}
\label{MP4d}
\end{align}

\noi
Hence, from \eqref{MP4b}, \eqref{MP4c}, and \eqref{MP4d},  
we conclude that the contribution from this case 
(after taking the $L^4_T$-norm) is
bounded by the right-hand side of \eqref{MP1b}.

\medskip

\noi
$\bullet$
{\bf Case 2:} $2^{j_1} \sim 2^{j_2} \gg 2^j$.
\\
\indent
In this case, we have $|j_1 - j_2|\le 2$ and $j \le  j_1  -3$.
Proceeding as in \eqref{MP4c}, we have 
\begin{align}
\begin{split}
\eqref{MP4b} 
& \les
\sum_{k = -2}^2
\sum_{j_1 = 3}^\infty
\bigg(
\sum_{j = 3}^{j_1-3} 2^{2(s-\frac 14) j}\\
& \hphantom{XXXXX}
\times
\|  D^\frac 14 \P_{+, \HI}( \Q_{j_1} \P_{+}e^{-i F} \cdot  \Q_{j_1+k}\dx \P_{-, \hi} e^{iF})\|_{L^4_x }^2\bigg)^\frac 12
\\
& \les
\sum_{k = -2}^2 
\sum_{j_1 = 3}^\infty
2^{(s-\frac 14) j_1}
\|  D^\frac 58 \Q_{j_1} \P_{+}e^{-i F}\|_{L^8_x}
\| \Q_{j_1+k} D^\frac 58 \P_{-, \hi} e^{iF}\|_{L^8_x }\\
& \les
\sum_{k = -2}^2 
\sum_{j_1 = 3}^\infty
2^{(s-\frac 14) j_1}
\|  \Q_{j_1} \dx \P_{+}e^{-i F}\|_{L^2_x}
\|\Q_{j_1+k} \dx  \P_{-, \hi} e^{iF}\|_{L^2_x }\\
& \les 
\bigg(
\sum_{j_1 = 3}^\infty2^{2(s-\frac 14) j_1}
\|   \Q_{j_1} \P_{+}(e^{-i F}v)\|_{L^2_x}^2
\bigg)^\frac 12
\|v\|_{L^2_x }, 
\end{split}
\label{MP4e}
\end{align}

\noi
where we used Cauchy-Schwarz's inequality (in $j_1$) in the last step.
\noi
Hence, from \eqref{MP4b}, \eqref{MP4e}, and \eqref{MP4d},  
we conclude that the contribution from this case 
(after taking the $L^4_T$-norm) 
is
bounded by the right-hand side of \eqref{MP1b}.

\smallskip

Putting everything together, 
we  conclude \eqref{MP1b}
when $(p, q) = (4, 4)$
and $\frac 14 <  s \le \frac 12$.
\end{proof}

The following proposition allows us to  estimate the gauged function $w$
in terms of $v$.

\begin{proposition}\label{PROP:MP2}
Let $0 \le s \le \frac 12$.
Then, there exists $\ta > 0$ such that 
\begin{align}
\begin{split}
& \|\ind_{[0, T]}\cdot \NN_{\dl}(w, v)\|_{Y^{s, -\frac 12}} \\
& \quad \les 
 \|v \|_{L^4_{T, x}}^2
 +  \Big( T^\ta \| v \|_{L^\infty_T L^2_x} + \|  v \|_{L^4_{T, x}} 
+ \| v \|_{X^{-1, 1}(T)}\Big)\|w\|_{X^{s, \frac 12}(T)}\\
& \qquad 
+ T^\ta \|v \|_{L^\infty_TL^2_x}^2 
\| w \|_{Y^{s, \frac 12}(T)}
+ T^\ta
 \dl^{-1}(1+\dl^{- 1}) \big( 1 + \| v\|_{L^\infty_T L^2_x}\big)^2
\|  v \|_{L^{\infty}_TH^s_x}
\end{split}
\label{MP2a}
\end{align}

\noi
for
any $0 < \dl \le \infty$, $0 < T \le 1$, 
and 
any smooth solution $v$ to \eqref{ILW2}  the time interval on $[0, T]$, 
where $w = w(v)$ is as in \eqref{gauge6} and 
 $\NN_{\dl}(w, v)$ is as in \eqref{ILW6}.

\end{proposition}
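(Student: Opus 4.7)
The plan is to decompose
\[
\NN_\dl(w,v) = \mathrm{I} + \mathrm{II} + \mathrm{III} + \mathrm{IV}
\]
following \eqref{ILW6}, where $\mathrm{I} = -2\dx\P_{+,\hi}(W\P_-\dx v)$ and $\mathrm{II} = -2\dx\P_{+,\hi}(\P_\lo e^{iF}\P_-\dx v)$ are the two terms present in the pure BO case treated in Proposition~3.4 of \cite{MP}, while $\mathrm{III} = i\dx\P_{+,\hi}(e^{iF}\Qdl v)$ and $\mathrm{IV} = -i\P_0(v^2)w$ are genuinely new. The first two terms will be estimated by directly quoting/mimicking the MP proof; the novelty lies in estimating $\mathrm{III}$ (the perturbation coming from viewing ILW as BO plus $\Qdl\dx v$) and $\mathrm{IV}$ (a periodic-case artifact). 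Everything is measured in $Y^{s,-\frac12} = X^{s,-\frac12} + \wt Z^{s,-1}$, so one is free to assign each piece to whichever component is more convenient.

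For $\mathrm{I}$ and $\mathrm{II}$, the MP argument proceeds by a Littlewood--Paley decomposition in the spatial frequencies together with a further decomposition in modulation. The high-high paraproduct in $\mathrm{I}$ is controlled using the product estimate Lemma~\ref{LEM:prod3} to redistribute derivatives between $W$ (whose derivative is $w$) and $\P_-\dx v$, combined with Bourgain's $L^4$-Strichartz estimate (Lemma~\ref{LEM:L4}) and the fractional Leibniz rule (Lemma~\ref{LEM:prod1}); this generates the terms $\|v\|_{L^4_{T,x}}^2$, $\|v\|_{L^4_{T,x}}\|w\|_{X^{s,\frac12}(T)}$, and $\|v\|_{X^{-1,1}(T)}\|w\|_{X^{s,\frac12}(T)}$ appearing on the right of \eqref{MP2a}. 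For $\mathrm{II}$, the key point is that $\P_\lo e^{iF}$ is a bounded multiplier (via Lemma~\ref{LEM:prod2}) and that an $X^{s,\frac12}\to X^{s,\frac12}$ bound for the product with a low-frequency factor plus a duality argument in modulation yields the $T^\ta\|v\|_{L^\infty_T L^2_x}^2\|w\|_{Y^{s,\frac12}(T)}$ contribution. These are verbatim the arguments in \cite[Prop.~3.4]{MP}, so the task is mainly to verify that nothing in the MP argument used $\dl=\infty$.

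The third term is where the depth enters. Using Lemma~\ref{LEM:smooth}, the multiplier $\Qdl$ is a smoothing operator of arbitrary order with a gain $\dl^{-1}(1+\dl^{-s-1})$ (one derivative is absorbed and one extra is available to match the outer $\dx$). Writing $\mathrm{III}$ as an $\wt Z^{s,-1}$ contribution, I would bound
\[
\|\mathrm{III}\|_{L^2_T H^{s-1}_x} \lesssim \|\dx(e^{iF}\Qdl v)\|_{L^2_T H^{s-1}_x} \lesssim \|e^{iF}\Qdl v\|_{L^2_T H^s_x},
\]
then apply Lemma~\ref{LEM:prod2} to control multiplication by $e^{iF}$ by $(1+\|v\|_{L^\infty_T L^2_x})$, and finally Lemma~\ref{LEM:smooth} to produce the factor $\dl^{-1}(1+\dl^{-1})\|v\|_{L^\infty_T H^s_x}$, with a $T^\ta$ coming from Hölder in time. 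A standard argument converting an $L^2_T H^{s-1}_x$ bound into an $\wt Z^{s,-1}(T)\subset Y^{s,-\frac12}(T)$ bound (cf.\ \cite{MP}) finishes this piece.

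For the fourth term, observe that $\P_0(v^2)(t)$ is a (real-valued) function of time only with $|\P_0(v^2)(t)|\lesssim \|v(t)\|_{L^2}^2$, so $\mathrm{IV}$ is essentially multiplication of $w$ by a time-dependent scalar of size $\|v\|_{L^\infty_T L^2_x}^2$. Placing this into $\wt Z^{s,-1}(T)$ and gaining a $T^\ta$ from Hölder yields the contribution $T^\ta\|v\|_{L^\infty_T L^2_x}^2\|w\|_{Y^{s,\frac12}(T)}$. The main obstacle I expect is bookkeeping: the $Y^{s,-\frac12}$-norm requires splitting each of the four pieces between $X^{s,-\frac12}$ and $\wt Z^{s,-1}$ in a way compatible with the $T^\ta$-gains and with the high-high resonance analysis of \cite{MP} (in particular, ensuring that derivatives falling on the low-frequency factor in $\mathrm{II}$ are handled by the $X^{-1,1}(T)$-component of $v$), while the genuinely new pieces $\mathrm{III}$ and $\mathrm{IV}$ are comparatively soft because of the smoothing of $\Qdl$ and the scalar nature of $\P_0(v^2)$.
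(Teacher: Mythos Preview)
There is a genuine gap stemming from a misreading of the $Y^{s,-\frac12}$-space.  By definition $\|u\|_{Y^{s,b}} = \|u\|_{X^{s,b}} + \|u\|_{\wt Z^{s,b-\frac12}}$, so $Y^{s,-\frac12}$ is the \emph{intersection} $X^{s,-\frac12}\cap\wt Z^{s,-1}$, not a sum; you must control \emph{both} components, and the inclusion you invoke, ``$\wt Z^{s,-1}(T)\subset Y^{s,-\frac12}(T)$'', goes the wrong way (compare \eqref{loc2}).  This is harmless for $\mathrm{IV}$, since an $L^2_tH^s_x$-bound controls both components, but it breaks your treatment of $\mathrm{III}$: an $L^2_TH^{s-1}_x$-bound does \emph{not} control $\|\cdot\|_{X^{s,-\frac12}}$ (take $\tau$ near $|n|n$ with $|n|$ large), so the chain $\|\mathrm{III}\|_{L^2_TH^{s-1}_x}\lesssim\|e^{iF}\Qdl v\|_{L^2_TH^s_x}$ followed by Lemmas~\ref{LEM:prod2} and~\ref{LEM:smooth} does not close.

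The paper instead uses the embedding $L^{1+\kappa}_TH^s_x\hookrightarrow Y^{s,-\frac12}$ for time-localized functions (valid for both the $X^{s,-\frac12}$ and $\wt Z^{s,-1}$ components via Hausdorff--Young and H\"older in $\tau$) and estimates $\|\dx\P_{+,\hi}(e^{iF}\Qdl v)\|_{L^{1+\kappa}_TH^s_x}$ at full $H^s$-regularity \emph{with the derivative kept}.  Since Lemma~\ref{LEM:prod2} is restricted to $0\le s\le\tfrac12$, one cannot simply trade the outer $\dx$ for $H^{s+1}$; the essential step is to expand $\dx(e^{iF}\Qdl v)=ie^{iF}v\,\Qdl v+e^{iF}\Qdl\dx v$ by the product rule, apply Lemma~\ref{LEM:prod2} at regularity $s$, and then use the arbitrary-order smoothing of $\Qdl$ (Lemma~\ref{LEM:smooth}) to absorb the extra derivative in $\Qdl\dx v$ and the Sobolev loss in the bilinear piece $v\,\Qdl v$ (handled via Lemma~\ref{LEM:prod1}).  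This expansion is also what produces the second factor $(1+\|v\|_{L^\infty_TL^2_x})$ in the last term of \eqref{MP2a}.  Two minor points: term $\mathrm{II}$ does not contain $w$, so the contribution $T^\ta\|v\|_{L^\infty_TL^2_x}^2\|w\|_{Y^{s,\frac12}(T)}$ cannot come from it --- it comes from $\mathrm{IV}$; and the relevant references in \cite{MP} are Proposition~3.5 and Lemma~3.7 rather than Proposition~3.4.
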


Note that  the last term on the right-hand side of \eqref{MP2a}
contains a linear term $\dl^{-1} (1+\dl^{-1}) \|  v \|_{L^{\infty}_TH^s_x}$.
In order to implement a bootstrap argument, 
we need additional smallness.
This
is the primary reason that we keep a small power of $T$ for this term.
The factor $T^\ta$ on the other terms are used in \eqref{WP3}.
See Section \ref{SEC:LWP}.

We now present the
proof of Proposition \ref{PROP:MP2}.

\begin{proof}[Proof of Proposition \ref{PROP:MP2}]
By arguing as in  \cite[Proposition~3.5 and Lemma~3.7]{MP}, 
we can bound the contribution from 
 the first two terms on the right-hand side of \eqref{ILW6}
by the right-hand side of \eqref{MP2a}.

Let us first treat the fourth
term on the right-hand side of \eqref{ILW6}.
By \eqref{loc2}, we have 
\begin{align}
\| \ind_{[0, T]} (t)\cdot  \P_0( v^2)  w\|_{L^2_t H^s_x}
  \les T^\ta \|v \|_{L^\infty_TL^2_x}^2 
\| w \|_{L^\infty_T H^s_x}
 \les T^\ta \|v \|_{L^\infty_TL^2_x}^2 
\| w \|_{Y^{s, \frac 12}(T)}.
\label{MP2b}
\end{align}

\noi
We now turn our attention to the third term in \eqref{ILW6}.
Proceeding as in \cite[(3-47)]{MP}
with $\dx F =  v$
and Lemma \ref{LEM:prod2}, we have
\begin{align}
\begin{split}
&
   \|\ind_{[0, T]} (t)\cdot  \dx\P_{+, \hi}(e^{i   F}  \Qdl  v) \|_{Y^{s, -\frac 12}}
 \les 
\|   \dx\P_{+, \hi}(e^{i   F}  \Qdl  v) \|_{L^{1+\kk}_TH^s_x}\\
& \quad \le 
\|  \P_{+, \hi}(e^{i   F}  v  \Qdl  v) \|_{L^{1+\kk}_TH^s_x}
+ \|    \P_{+, \hi}(e^{i   F}  \Qdl \dx  v) \|_{L^{1+\kk}_TH^s_x}\\
& \quad 
\les T^\ta  \big( 1 + \| v\|_{L^\infty_T L^2_x}\big)
\Big(
\| v  \Qdl  v \|_{L^{\infty}_TH^s_x}
+ \| \Qdl \dx  v \|_{L^{\infty}_TH^s_x}
\Big)
\end{split}
\label{MP2c}
\end{align}

\noi
for any  $\kk > 0$.
From the fractional Leibniz rule (Lemma \ref{LEM:prod1}), 
Sobolev's embedding, 
and Lemma \ref{LEM:smooth}, 
we have 
\begin{align}
\begin{split}
\| v  \Qdl  v \|_{L^{\infty}_TH^s_x}
& \les 
\| v \|_{L^{\infty}_TH^s_x}\|  \Qdl  v \|_{L^{\infty}_{T,  x}}
+ 
\| v \|_{L^{\infty}_TL^2_x}\|  \Qdl  v \|_{L^\infty_T W^{s, \infty}_x}\\
& \les 
\| v \|_{L^{\infty}_TH^s_x}\|  \Qdl  v \|_{L^{\infty}_{T}H^1_x}
+ 
\| v \|_{L^{\infty}_TL^2_x}\|  \Qdl  v \|_{L^\infty_T H^{s+1}_x}\\
& \les \dl^{-1}(1+\dl^{- 1})
\| v \|_{L^{\infty}_TH^s_x}\| v \|_{L^{\infty}_TL^2_x}.
\end{split}
\label{MP2d}
\end{align}

\noi
By Lemma \ref{LEM:smooth}, we also have
\begin{align}
 \| \Qdl \dx  v \|_{L^{\infty}_TH^s_x}
\les 
 \dl^{-1}(1+\dl^{- 1})
\|  v \|_{L^{\infty}_TH^s_x}.
\label{MP2e}
\end{align}

\noi
Hence, from  \eqref{MP2c}, \eqref{MP2d}, and \eqref{MP2e}, 
we obtain
\begin{align}
\begin{split}
   \| \ind_{[0, T]} (t)\cdot\dx\P_{+, \hi}(e^{i   F}  \Qdl  v) \|_{Y^{s, -\frac 12}}
 \les 
T^\ta\dl^{-1}(1+\dl^{- 1}) \big( 1 + \| v\|_{L^\infty_T L^2_x}\big)^2
\|  v \|_{L^{\infty}_TH^s_x}.
\end{split}
\label{MP2f}
\end{align}

Therefore, from \eqref{MP2b} and \eqref{MP2f}, 
we conclude \eqref{MP2a}.
\end{proof}

\begin{remark}\label{REM:MP} \rm
In the proof of Proposition \ref{PROP:MP2}, 
we handled the first two terms on the right-hand side of \eqref{ILW6}
by simply invoking 
the bilinear estimates in  \cite[Proposition 3.5 and Lemma~3.7]{MP}.
We point out  that 
these bilinear estimates (which hold  on both the real line and  the circle)
 are the key ingredients
 for the simplification and unification
of the $L^2$ well-posedness argument for the BO equation \eqref{BO1}
 presented in \cite{MP}.
\end{remark}

\begin{remark} \label{REM:conti} \rm

We point out that  the map $T\mapsto 
 \|\ind_{[0, T]}\cdot \NN_{\dl}(w, v)\|_{Y^{s, -\frac 12 }}$
 is continuous
 for smooth $w$ and $v$.
 Indeed, given $h > 0$, it follows from the triangle inequality that 
\begin{align*}
  \Big|\| & \ind_{[0, T+ h]}\cdot \NN_{\dl}(w, v)\|_{Y^{s, -\frac 12 }}
-   \|\ind_{[0, T]}\cdot \NN_{\dl}(w, v)\|_{Y^{s, -\frac 12 }}\Big|\\
& \le
 \|\ind_{[T, T+h]}\cdot \NN_{\dl}(w, v)\|_{Y^{s, -\frac 12 }}\\
& \les   \|\ind_{[T, T+h]}\cdot \NN_{\dl}(w, v)\|_{X^{s, 0}}
 = 
 \|\NN_{\dl}(w, v)\|_{L^2([T, T+h]; H^s)}\\
 & \too 0, 
\end{align*}
 
 \noi
 as $h \to 0$.
 A similar computation holds for $ h < 0$, tending to $0$.
 See 
 \cite[Lemma A.8]{BOP2}, 
 \cite[Lemma 4.4]{Bring}, 
 and \cite[Remark 3.3]{OQS}
for the continuity property (in $T$) of the $X^{s, b}$-norm and its variants
under the multiplication by 
$ \ind_{[0, T]}$.
See also 
\cite[Lemma 6.3]{KP0}
and \cite[Lemma 8.1]{GO}
for similar statements
in the context of the more complicated short-time Fourier restriction norm method.
%
%

\end{remark}

\section{Well-posedness of ILW in $L^2$}
\label{SEC:LWP}

In this section, we present the proof of 
Theorem \ref{THM:1}.
With Lemma~\ref{LEM:MP1} and Proposition~\ref{PROP:MP2}
in hand,  it essentially follows from the discussion in  Section 4 of \cite{MP}, 
and thus 
we keep our presentation brief.
Our main goal is to prove local well-posedness
of the renormalized ILW~\eqref{ILW2} in $L^2(\T)$.
Once we establish local well-posedness, 
global well-posedness follows from the $L^2$-conservation.

Fix $0 < \dl < \infty$ and $0 \le s \le\frac 12$.
We only discuss the case where  initial data has sufficiently small $L^2$-norm.
More precisely, 
in the remaining part of this section, 
we assume that all the initial data in the following discussion have the $L^2$-norm less than 
some sufficiently small 
$\eps_0 = \eps_0(\min(\dl, 1)) > 0$ (to be chosen later).
See Remark \ref{REM:scaling} for the general case.

\subsection{A priori estimates for smooth solutions}
\label{SUBSEC:LWP1}

Let us first derive a priori estimates
for smooth solutions to \eqref{ILW2}.
Let $v$ be a smooth global  solution to~\eqref{ILW2}
and 
$w = w(v)$ be the gauged function defined in~\eqref{gauge6}.
Given $T > 0$, set
\begin{align}
N_T^s(v) = \max \Big( \| v\|_{L^\infty_T H^s_x}, \| v\|_{L^4_T W^{s, 4}_x}, 
\| w(0)\|_{H^s}, 
\| \ind_{[0, T]} \cdot \NN_{\dl}(w, v) \|_{Y^{s, -\frac 12}}\Big), 
\label{WP1}
\end{align}

\noi
where $\NN_{\dl}(w, v)$ is as in \eqref{ILW6}.
We first  note that
from the Duhamel formulation of~\eqref{ILW6}
with  \eqref{loc1} (for $Y^{s, \frac 12}(T)$),
Lemma \ref{LEM:lin},  and \eqref{WP1}, 
we have 
\begin{align}
\| w \|_{Y^{s, \frac 12}(T)}
\les 
\| w(0)\|_{H^s} + 
\|\ind_{[0, T]}\cdot  \NN_{\dl}(w, v) \|_{Y^{s, -\frac 12}}
\les N^s_T(v).
\label{WP3a}
\end{align}

\noi
Thanks to the smoothness of $v$ (and $w$), 
we see that 
$N^s_T(v)$  is continuous in $T$; see Remark~\ref{REM:conti}.
Moreover, from the definition, 
$N^s_T(v)$ is non-decreasing in $T$.
From~\eqref{gauge6} and Lemma~\ref{LEM:prod2}, we have
\begin{align}
\| w(0)\|_{H^s} \les \big(1 + \|v(0)\|_{L^2}\big) \|v(0)\|_{H^s}.
\label{WP2}
\end{align}

\noi
See also \cite[(3-49)]{MP}.
Hence, from \eqref{WP1} and \eqref{WP2}, we have 
\begin{align}
\lim_{T \to 0+} N_T^s(v) 
\les \big(1 + \|v(0)\|_{L^2}\big) \|v(0)\|_{H^s}.
\label{WP3}
\end{align}

\noi
In \eqref{WP3}, 
 the control on 
 $\|  \ind_{[0, T]} \cdot \NN_{\dl}(w, v) \|_{Y^{s, -\frac 12}}$
follows from \eqref{MP2a} and \eqref{MP1a}.
On the other hand, from 
Lemma \ref{LEM:MP1} and Proposition \ref{PROP:MP2}
with \eqref{WP3a} and \eqref{WP2}, 
we have\footnote{Note that the constants in \eqref{WP4} depend
on $s$.  We, however, use \eqref{WP4} only at two regularities ($s$ and $0$)
and thus we can simply take the worse constants in \eqref{WP4}.}
\begin{align}
\begin{split}
N^s_T(v) 
& \le  C_1 \big( 1 + \|v(0)\|_{L^2}\big)\|v(0)\|_{H^s}\\
& \quad 
+ C_2 (1 + \dl^{-1})^2N^0_T(v)\big(1+ N^0_T(v)\big)N^s_T(v)\\
& \quad +  C_3  T^\ta \dl^{-1} ( 1+ \dl^{-1}) N^s_T(v)
\end{split}
\label{WP4}
\end{align}

\noi
for any $0 < T \le 1$.
We point out that
there are linear terms
 on the right-hand sides of~\eqref{MP1b}
and \eqref{MP2a}:
$\|w\|_{Y^{s, \frac 12}(T)}$, 
$T \dl^{-1}
 \|v \|_{L^\infty_T L^2_x}$, 
and $  T^\ta \dl^{-1} ( 1+ \dl^{-1})
\|  v \|_{L^{\infty}_TH^s_x}$, 
where the last  two terms
are bounded by 
 the last term on the right-hand side of~\eqref{WP4}.
As for 
the linear term 
$\|w\|_{Y^{s, \frac 12}(T)}$ on the right-hand side of \eqref{MP1b},  
we can use \eqref{WP3a} and~\eqref{MP2a}
to replace it by $  T^\ta \dl^{-1} ( 1+ \dl^{-1})
\|  v \|_{L^{\infty}_TH^s_x}$ (along with the other terms
on the right-hand side of \eqref{MP2a}), 
which is in turn 
 bounded by 
 the last term on the right-hand side of~\eqref{WP4}.

By choosing sufficiently small  $T_* = T_*(\min(\dl, 1)) > 0$
such that $C_3 T_*^\ta \dl^{-1} ( 1+ \dl^{-1}) \le \frac 12$,
we then have
\begin{align}
\begin{split}
N^s_T(v) 
& \le 2C_1 \big( 1 + \|v(0)\|_{L^2}\big)\|v(0)\|_{H^s}\\
& \quad + 2C_2 (1 + \dl^{-1})^2
N^0_T(v)\big(1+ N^0_T(v)\big)
N^s_T(v)
\end{split}
\label{WP5}
\end{align}

\noi
for any $0 < T \le T_*$.
By a standard continuity argument, 
we have
\begin{align}
N^0_T(v) 
& \les \eps 
\label{WP6}
\end{align}

\noi
for any $0 < T \le T_*$, 
provided that $ \|v(0)\|_{L^2}  = \eps  \le \eps_0$
for some sufficiently small $\eps_0 
= \eps_0(\min(\dl, 1))$.
Then, by applying a continuity argument with \eqref{WP5} and \eqref{WP6}, 
we obtain 
\begin{align}
N^s_T(v) 
& \les \|v(0)\|_{H^s}
\label{WP7}
\end{align}

\noi
for any $0 < T \le T_*$.

%
%

\begin{remark}\rm
From \eqref{WP4},  we can choose 
 $T_* = T_*(\min(\dl, 1)) \ges 1$
and small  $\eps_0 
= \eps_0(\min(\dl, 1)) \ge \eps_1 > 0$, 
 uniformly in $\dl \ge 1$.
The same comment applies to the discussion below.
\end{remark}

\subsection{Difference estimates}
\label{SUBSEC:LWP2}

Let $v_j$, $j = 1, 2$, be smooth global solutions
to \eqref{ILW2}
with 
\begin{align}
\| v_j(0)\|_{L^2} \le \eps \le \eps_0
= \eps_0(\min(\dl, 1)), 
\label{WP7a} 
\end{align}

\noi
and  let $F_j$ be the mean-zero primitive of $v_j$
as defined in \eqref{gauge1}.
Then, as in~\eqref{gauge4}
and~\eqref{gauge6}, 
define the associated gauged functions $W_j$ and $w_j$, $j = 1, 2$, by setting
\begin{align*}
W_j  = \P_{+,  \hi}(e^{i F_j}) \qquad
\text{and}\qquad 
w_j = \dx W_j.
\end{align*}

\noi
Then, from the discussion in the previous subsection (see \eqref{WP6} and \eqref{WP7}
with \eqref{WP3a}), we have 
\begin{align}
\begin{split}
\max_{j = 1, 2}\| w \|_{Y^{0, \frac 12}(T)}
& \les N^0_T(v_j) 
 \les  \eps\le \eps_0, \\
\max_{j = 1, 2}\| w \|_{Y^{s, \frac 12}(T)}
& \les N^s_T(v_j) 
 \les \max_{j = 1, 2}\|v_j(0)\|_{H^s} =:  M 
\end{split}
\label{WP8}
\end{align}

\noi
for any $0 < T\le T_*= T_*(\min(\dl, 1))$,  $j = 1, 2$.

Let $\wt v = v_1 - v_2$, 
$\wt W = W_1 - W_2$, and $\wt w = w_1 - w_2$.
By 
the mean value theorem with \eqref{diff1} and \eqref{diff2}, we have
\begin{align}
\begin{split}
\| e^{\pm iF_1(0)} - e^{\pm iF_2(0)}\|_{L^\infty}
& \leq \| F_1(0)- F_2(0)\|_{L^\infty} \les \| \wt v(0) \|_{ L^2}, \\
\| e^{\pm iF_1} - e^{\pm iF_2}\|_{L^\infty_{T, x}}
& \leq \| F_1- F_2\|_{L^\infty_{T, x}} \les \| \wt v \|_{L^\infty_T L^2_x}.
\end{split}
\label{ILWx5}
\end{align}

\noi
Then, from \eqref{gauge6}, Lemma \ref{LEM:prod2}, and \eqref{ILWx5}
with \eqref{WP7a}, we have 
\begin{align}
\begin{split}
\| \wt w(0)\|_{H^s}
&  \les 
\big(1 + \|v_1(0)\|_{L^2}\big)\|\wt v(0)\|_{H^s}\\
& \quad + 
 \Big( \|\wt v(0) \|_{ L^2}
+ \| e^{iF_1(0)} - e^{iF_2(0)}\|_{L^\infty}\big(1+ \|v_1(0)\|_{ L^2}\big)\Big)
\|v_2(0)\|_{H^s}\\
&  \les 
\|\wt v(0)\|_{H^s}
+ \|v_2(0)\|_{H^s}
\|\wt v(0) \|_{ L^2}.
\end{split}
\label{ZP1}
\end{align}

We now estimate $\wt w$.
From \eqref{ILW6}, we have
\begin{align}
\begin{split}
\dt & \wt w - \H \dx^2 \wt w \\
& = - 2 \dx \P_{+, \hi}(W_1 \P_- \dx \wt v)
- 2 \dx \P_{+, \hi}(\wt W \P_- \dx v_2)\\
& \quad 
- 2 \dx \P_{+, \hi}(\P_\lo e^{iF_1} \P_- \dx \wt v) 
- 2 \dx \P_{+, \hi}\big(\P_\lo (e^{iF_1} - e^{iF_2} )\P_- \dx v_2\big) \\
& \quad 
+ i \dx\P_{+, \hi}(e^{iF_1}  \Qdl \wt v) 
+ i \dx\P_{+, \hi}\big((e^{iF_1} - e^{iF_2})  \Qdl v_2\big) \\
& \quad 
- i  \P_0( v_1^2) \wt w
- i  \P_0( v_1^2 - v_2^2 ) w_2\\
& = : B_1 + \cdots + B_8.
\end{split}
\label{ILWx1}
\end{align}

\noi
Note that the first four terms on the right-hand side
are already treated in Section~4 of~\cite{MP}, 
which, in view of \eqref{WP8} and \eqref{ILWx5}, 
are bounded by  
the first two terms on the right-hand side of \eqref{ZP2} below.\footnote{In fact, there is 
is another term $\eps \|\wt w\|_{X^{s, \frac 12}(T)}$, 
controlling the second term on the right-hand side of \eqref{ILWx1};
see \cite[Proposition 3.5]{MP}.
In \eqref{ZP2}, this term is already hidden on the left-hand side, 
which explains the reason for the factor 2
in front of the last term in \eqref{ZP2}.
}
By a slight modification of \eqref{MP2b}
with \eqref{WP8}, we have 
\begin{align}
\begin{split}
\| B_7 + B_8 \|_{Y^{s, - \frac 12}(T)}
& \les  \|v_1 \|_{L^\infty_TL^2_x}^2 
\| \wt w \|_{Y^{s, \frac 12}(T)}\\
& \quad + 
\big(\|v_1 \|_{L^\infty_TL^2_x}
+ \|v_2 \|_{L^\infty_TL^2_x}\big)
\|\wt v \|_{L^\infty_TL^2_x}
\| w_2 \|_{Y^{s, \frac 12}(T)}\\
& \les \eps^2 
\| \wt w \|_{Y^{s, \frac 12}(T)}
+ 
\eps \| w_2 \|_{Y^{s, \frac 12}(T)}
\|\wt v \|_{L^\infty_TL^2_x}
\end{split}
\label{ILWx2}
\end{align}

\noi
for any $0 < T\le T_*$.
Proceeding as in  the proof of Proposition \ref{PROP:MP2}
with \eqref{WP8}, 
given an absolute  small constant $c_0 > 0$ (to be chosen later), 
by possibly making 
 $T_* = T_*(\min(\dl, 1)) > 0$   smaller, we can show that 
 there exists $\ta > 0$ such that 
\begin{align}
\begin{split}
\| B_5  \|_{Y^{s, - \frac 12}(T)}
& \le  
CT^\ta\dl^{-1}(1+\dl^{- 1}) \big( 1 + \| v_1\|_{L^\infty_T L^2_x}\big)\\
& 
\quad \times 
\Big( \big(1 + \| v_1 \|_{L^{\infty}_TL^2_x}\big)
\|\wt v \|_{L^{\infty}_TH^s_x}
+  \| v_1 \|_{L^{\infty}_TH^s_x}
\|\wt v \|_{L^{\infty}_TL^2_x}\Big)\\
& \le c_0 
\Big(\|\wt v \|_{L^{\infty}_TH^s_x}
+  \| v_1 \|_{L^{\infty}_TH^s_x}
\|\wt v \|_{L^{\infty}_TL^2_x}\Big)
\end{split}
\label{ILWx3}
\end{align}

\noi
for any $0 < T\le T_*$.

Let us now estimate $B_6$.
By Cauchy-Schwarz's inequality, Sobolev's inequality, and Minkowski's
integral inequality  (see \cite[(3-47)]{MP}), we have 
\begin{align}
\begin{split}
\| B_6  \|_{Y^{s, - \frac 12}(T)}
& \les  
\big\|  \dx\P_{+, \hi}\big((e^{iF_1} - e^{iF_2})  \Qdl v_2\big) \big\|_{L^{1+\kk}_TH^s_x}\\
&  \le 
\big\|  \P_{+, \hi}\big(\dx (e^{iF_1} - e^{iF_2})    \Qdl  v_2\big) \big\|_{L^{1+\kk}_TH^s_x}\\
& \quad + \big\|   
 \P_{+, \hi}\big((e^{iF_1} - e^{iF_2}) \Qdl \dx  v_2\big) \big\|_{L^{1+\kk}_TH^s_x}\\
&  \le 
\big\|  \P_{+, \hi}\big(e^{iF_1}(v_1 - v_2)     \Qdl  v_2\big) \big\|_{L^{1+\kk}_TH^s_x}\\
&   \quad 
+ \big\|  \P_{+, \hi}\big((e^{iF_1} - e^{iF_2}) v_2    \Qdl  v_2\big) \big\|_{L^{1+\kk}_TH^s_x}\\
& \quad + \big\|   
 \P_{+, \hi}\big((e^{iF_1} - e^{iF_2}) \Qdl \dx  v_2\big) \big\|_{L^{1+\kk}_TH^s_x}
\end{split}
\label{ILWx4}
\end{align}

\noi
for any $\kk > 0$.
By applying Lemma \ref{LEM:prod2} and 
proceeding as in  \eqref{MP2d} with \eqref{WP8}, we have
\begin{align}
\begin{split}
& \big\|  \P_{+, \hi}\big(e^{iF_1}(v_1 - v_2)     \Qdl  v_2\big) \big\|_{L^{1+\kk}_TH^s_x}\\
& \quad 
\les  
 \dl^{-1}(1+\dl^{- 1})
 \big( 1 + \| v_1\|_{L^\infty_T L^2_x}\big)
\Big( \| v_2 \|_{L^{\infty}_TL^2_x}
\|\wt v \|_{L^{\infty}_TH^s_x}
+  \| v_2 \|_{L^{\infty}_TH^s_x}
\|\wt v \|_{L^{\infty}_TL^2_x}\Big)\\
& \quad 
\les  
 \dl^{-1}(1+ \dl^{-1})
\Big(\eps \|\wt v \|_{L^{\infty}_TH^s_x}
+   \| v_2 \|_{L^{\infty}_TH^s_x}
\|\wt v \|_{L^{\infty}_TL^2_x}\Big).
\end{split}
\label{ILWx4a}
\end{align}

\noi
By Lemma \ref{LEM:prod2}, \eqref{ILWx5}, and  \eqref{MP2d}
with \eqref{WP8}, we have
\begin{align}
\begin{split}
& \big\|  \P_{+, \hi}\big((e^{iF_1} - e^{iF_2}) v_2    \Qdl  v_2\big) \big\|_{L^{1+\kk}_TH^s_x}\\
& \quad 
 \les \Big( \|\wt v \|_{L^\infty_T L^2_x}
+ \| e^{iF_1} - e^{iF_2}\|_{L^\infty_{T, x}}\big(1+ \|v_1\|_{L^\infty_T L^2_x}\big)
\Big)\|v_2 \Qdl  v_2\|_{L^\infty_TH^{s}_x}\\
& \quad 
 \les \dl^{-1}(1+ \dl^{-1})
\big(1+ \|v_1\|_{L^\infty_T L^2_x}\big)\|\wt v \|_{L^\infty_T L^2_x}
\|v_2 \|_{L^\infty_TH^s_x}\|v_2 \|_{L^\infty_TL^2_x}\\
& \quad 
 \les \eps \dl^{-1}(1+ \dl^{-1})
\|v_2 \|_{L^\infty_TH^s_x}
\|\wt v \|_{L^\infty_T L^2_x}.
\end{split}
\label{ILWx6}
\end{align}

\noi
Similarly, 
by Lemma \ref{LEM:prod2}, \eqref{ILWx5}, 
and Lemma \ref{LEM:smooth} with \eqref{WP8}, 
we have 
\begin{align}
\begin{split}
& \big\| \P_{+, \hi}\big((e^{iF_1} - e^{iF_2}) \Qdl \dx  v_2\big) \big\|_{L^{1+\kk}_TH^s_x}\\
& \quad 
\les \Big( \|\wt v \|_{L^\infty_T L^2_x}
+ \| e^{iF_1} - e^{iF_2}\|_{L^\infty_{T, x}}\big(1+ \|v_1\|_{L^\infty_T L^2_x}\big)
\Big)\| \Qdl  v_2\|_{L^\infty_TH^{s+1}_x}\\
& \quad 
 \les  \dl^{-1}(1+ \dl^{-1})
\|v_2 \|_{L^\infty_TH^s_x}
\|\wt v \|_{L^\infty_T L^2_x}.
\end{split}
\label{ILWx7}
\end{align}

Therefore, 
from the Duhamel formulation of 
\eqref{ILWx1}, 
Lemma \ref{LEM:lin}, \eqref{ILWx2}, \eqref{ILWx3}, 
\eqref{ILWx4}, 
\eqref{ILWx4a}, 
\eqref{ILWx6}, 
and 
\eqref{ILWx7}
together with \cite[the math display after (4-4)]{MP}, 
\eqref{ILWx5}, and  \eqref{WP8}, we obtain
\begin{align}
\begin{split}
\| \wt w\|_{Y^{s, \frac 12}(T)}
& \le C \| \wt w (0)\|_{H^s}\\
& \quad +
C \big(\eps + \|w_1\|_{X^{s, \frac 12}(T)}\big)\Big(\| \wt v \|_{L^\infty_T L^2_x} + \|  \wt v \|_{L^4_{T, x}} 
+ \| \wt v \|_{X^{-1, 1}(T)}\Big)
\\
&
\quad + 
C \eps  \| w_2 \|_{Y^{s, \frac 12}(T)}
\|\wt v \|_{L^\infty_TL^2_x}\\
& \quad + C \dl^{-1} (1+ \dl^{-1})
\Big( 
\eps \|\wt v \|_{L^{\infty}_TH^s_x}
+   \| v_2 \|_{L^{\infty}_TH^s_x}
\|\wt v \|_{L^{\infty}_TL^2_x}\Big)\\
& \quad 
+  2 c_0 
\Big(\|\wt v \|_{L^{\infty}_TH^s_x}
+  \| v_1 \|_{L^{\infty}_TH^s_x}
\|\wt v \|_{L^{\infty}_TL^2_x}\Big)
\end{split}
\label{ZP2}
\end{align}

\noi
for any $0 < T\le T_*$, 
where $c_0$ is as in \eqref{ILWx3}.

From \eqref{ILW3}, we have 
\begin{align}
\dt \wt v - \H \dx^2 \wt v = \Qdl\dx \wt v +  \dx \big((v_1 + v_2) \wt v\big).
\label{ZP3}
\end{align}

\noi
Then, by a slight modification of the proof of \eqref{MP1a} in 
Lemma \ref{LEM:MP1} with \eqref{WP8}, 
we have 
\begin{align}
\| \wt v \|_{X^{-1, 1}(T)}
 \les T^\ta (1 + \dl^{-1}) \|\wt v\|_{L^\infty_T L^2_x}
+  \eps \|\wt v\|_{L^4_{T, x}}
\label{ZP4}
\end{align}

\noi
for any $0 < T\le T_*$.

In the following, we estimate
the $L^p_T W^{s, q}_x$-norm of 
$\wt v = \P_+ \wt v+ \cj {\P_+ \wt v}$.
From \eqref{ILW8}
with $e^{iF} v = -i \dx e^{iF}$, 
we have 
\begin{align}
\begin{split}
\P_+ \wt v & = 
\P_{+, \LO} \wt v 
+  \P_{+, \HI} \wt v \\
& = 
\P_{+, \LO} \wt v 
- i \P_{+, \HI}(e^{-i F_1}  \wt w  )
- i \P_{+, \HI}\big((e^{-i F_1} - e^{-i F_2})  w_2  \big)\\
& \quad 
-i  \P_{+, \HI}\big(\P_{+, \hi} e^{-i F_1} \cdot \dx  \P_\lo (e^{i F_1} - e^{i F_2}) \big)\\
& \quad 
 -i  \P_{+, \HI}\big(\P_{+, \hi} (e^{-i F_1} - e^{-i F_2}) \cdot \dx \P_\lo e^{iF_2} \big)\\
& \quad 
-i  \P_{+, \HI}\big( \P_{+, \HI}e^{-i F_1} \cdot \dx \P_{-, \hi} (e^{i F_1} - e^{i F_2})\big)\\
& \quad 
 -i  \P_{+, \HI}\big( \P_{+, \HI}(e^{-i F_1} - e^{-i F_2}) \cdot \dx \P_{-, \hi} e^{iF_2}\big)\\
 & =: E_1 + \cdots +E_7.
\end{split}
\label{ZP5}
\end{align}

\noi
As pointed out in the proof of Lemma \ref{LEM:MP1}, 
\eqref{ZP5} agrees with the corresponding expression
in  \cite[p.385]{MP} (modulo constants).
In particular, 
 when (i)~$(p, q) = (\infty, 2)$ and $0 \le s \le \frac 12$
 or (ii)~$(p, q) = (4, 4)$ and $0 \le s \le \frac 14$, 
the $L^p_T W^{s, q}_x$-norm of 
$\wt v = \P_+ \wt v+ \cj {\P_+ \wt v}$
is already estimated in 
 \cite[(4-8)]{MP}, 
 except for the additional term 
 in the 
low-frequency contribution 
which is estimated as in \eqref{MP1dd}:
\begin{align}
\bigg\| J^s \int_0^t S(t-t') \P_{+, \LO} \Qdl \dx \wt v (t') dt' \bigg\|_{L^p_T L^q_x}  
\les T  \dl^{-1} \|\wt v \|_{L^\infty_T L^2_x}.
\label{ZP5a}
\end{align}

\noi
Hence, we only need to consider the case
$(p, q) = (4, 4)$ and $\frac 14 <  s \le \frac 12$.

In the following, we work with $T \in (0, T_*]$
such that  \eqref{WP8} holds.
By applying $\P_{+, \LO}$ to the Duhamel formulation of \eqref{ZP3}
and 
proceeding as in \cite[(3-17) and (3-18)]{MP} with \eqref{WP8}
and~\eqref{ZP5a}, 
given an absolute  small constant $c_1 > 0$ (to be chosen later), 
we have 
\begin{align}
\begin{split}
\|  J^s E_1\|_{L^4_{T, x}}
& \le \| \wt v(0)\|_{L^2}
+ C\big(\eps + T (1+ \dl^{-1}) \big) \|\wt v\|_{L^\infty_T L^2_x}\\
& \le \| \wt v(0)\|_{L^2}
+ (C\eps + c_1 ) \|\wt v\|_{L^\infty_T L^2_x}, 
\end{split}
\label{ZP6}
\end{align}

\noi
where the second step holds 
by possibly making $T_* = T_*(\min(\dl, 1)) > 0$ smaller.
Proceeding as in \eqref{MP1e} with~\eqref{WP8}, 
we have 
\begin{align}
\begin{split}
\|  J^s E_2\|_{L^4_{T, x}}
& \les
\big( 1+ \|v_1\|_{L^\infty_T L^2_x} \big)
 \|  \wt w  \|_{Y^{s, \frac 12}(T)}
  \les
 \|  \wt w  \|_{Y^{s, \frac 12}(T)}. 
 \end{split}
 \label{ZP6a}
\end{align}

\noi
By Lemma \ref{LEM:prod2}, \eqref{ILWx5},
Lemma \ref{LEM:L4} with \eqref{WP8}, we have 
\begin{align}
\begin{split}
\|J^s E_3\|_{L^4_{T, x}}
& \les 
 \Big(\|\wt v \|_{L^\infty_T L^2_x}
+ \| e^{-iF_1} - e^{-iF_2}\|_{L^\infty_{T, x}}
\big(1 + \|v_1\|_{L^\infty_T L^2_x}\big)\Big)
\| J^s w_2\|_{L^4_{T, x}}\\
& \les \|w_2\|_{Y^{s, \frac 12}(T)} \|\wt v \|_{L^\infty_T L^2_x}.
\end{split}
\label{ZP6b}
\end{align}

\noi
Proceeding as in  \cite[(3-14)]{MP} with \eqref{ILWx5} and \eqref{WP8}, we have
\begin{align}
\begin{split}
\|  J^s E_4\|_{L^4_{T, x}}
& \les \| v_1\|_{L^\infty_T L^2_x}
\| e^{iF_1} v_1 - e^{iF_2}v_2\|_{L^\infty_T L^2_x}\\
& \les \| v_1\|_{L^\infty_T L^2_x}
\Big( \| \wt v\|_{L^\infty_T L^2_x}
+ \| e^{iF_1} - e^{iF_2}\|_{L^\infty_{T, x}} \| v_2\|_{L^\infty_T L^2_x}\Big)\\
& \les \eps \| \wt v\|_{L^\infty_T L^2_x}.
\end{split}
\label{ZP7}
\end{align}

\noi
Similarly, 
proceeding as in  \cite[(3-14)]{MP}
with  \eqref{gauge1},  \eqref{ILWx5}, and \eqref{WP8}, we have 
\begin{align}
\begin{split}
\|  J^s E_5\|_{L^4_{T, x}}
& \les 
\|\dx \P_{+, \hi} (e^{-i F_1} - e^{-i F_2})\|_{L^\infty_T L^2_x}\| v_2\|_{L^\infty_T L^2_x}\\
& \les  \Big(\|\wt v \|_{L^\infty_T L^2_x}
+ \| e^{-iF_1} - e^{-iF_2}\|_{L^\infty_{T, x}}\|v_2\|_{L^\infty_T L^2_x}\Big)\| v_2\|_{L^\infty_T L^2_x}\\\
& \les \eps  \|\wt v \|_{L^\infty_T L^2_x}.
\end{split}
\label{ZP8}
\end{align}

\noi
Next, we treat $E_6$ and $E_7$.
Proceeding as in \eqref{ZP8}, we have 
\begin{align}
\begin{split}
\|\dx \P_{-, \hi} (e^{i F_1} - e^{i F_2})\|_{L^\infty_T L^2_x}
& \les  \|\wt v \|_{L^\infty_T L^2_x}.
\end{split}
\label{ZP10a}
\end{align}

\noi
By 
Lemma \ref{LEM:prod2} (recall that $\frac14 < s \le \frac 12$)
with \eqref{ILWx5} and \eqref{WP8}, we have 
\begin{align}
\begin{split}
 \| & e^{-i F_1}  v_1 - e^{- i F_2} v_2\|_{L^\infty_T H^{s-\frac 14}_x}
 \le \| e^{-i F_1} \wt v\|_{L^\infty_T H^{s-\frac 14}_x}
+ 
\| (e^{-i F_1}  - e^{- i F_2}) v_2\|_{L^\infty_T H^{s-\frac 14}_x}
\\
& \les  
\big(1 +  \|v_1\|_{L^\infty_T L^2_x}\big)
  \|\wt v \|_{L^\infty_T H^{s- \frac 14}_x}\\
& \quad 
+ \Big(  \|\wt v \|_{L^\infty_T L^2_x}
+ \| e^{-iF_1} - e^{-iF_2}\|_{L^\infty_{T, x}}
\big( 1+ \|v_1\|_{L^\infty_T L^2_x}\big) \Big) \| v_2 \|_{L^\infty_T H^{s- \frac 14}_x}\\
& \les 
  \|\wt v \|_{L^\infty_T H^{s}_x}
  +  \| v_2 \|_{L^\infty_T H^{s}_x}
   \| \wt v \|_{L^\infty_T L^2_x}.
\end{split}
\label{ZP10b}
\end{align}

\noi
Thus, proceeding as in 
\eqref{MP4b}-\eqref{MP4e} with \eqref{ZP10a}, \eqref{ZP10b},
and \eqref{WP8},  we
obtain
\begin{align}
\begin{split}
 & \|  J^s E_6\|_{L^4_{T, x}}
+ \| J^s E_7\|_{L^4_{T, x}}\\
& \quad \les  \big(1 + 
 \|v_1\|_{L^\infty_T L^2_x}\big)
  \|v_1 \|_{L^\infty_T H^s_x}
 \|\wt v \|_{L^\infty_T L^2_x}\\
& \quad \quad +    
\Big(  \|\wt v \|_{L^\infty_T H^{s}_x}
  +  \| v_2 \|_{L^\infty_T H^{s}_x}
   \| \wt v \|_{L^\infty_T L^2_x}\Big)
 \|v_2 \|_{L^\infty_T L^2_x}\\
& \quad \les 
  \|v_1 \|_{L^\infty_T H^s_x}
 \|\wt v \|_{L^\infty_T L^2_x}
+
\eps  \|\wt v \|_{L^\infty_T H^s_x}
+   \eps  \| v_2 \|_{L^\infty_T H^{s}_x}
   \| \wt v \|_{L^\infty_T L^2_x}.
\end{split}
\label{ZP11}
\end{align}

\noi
Therefore, putting \eqref{ZP5}, \eqref{ZP6}, 
\eqref{ZP6a}, \eqref{ZP6b}, 
\eqref{ZP7}, \eqref{ZP8},
and \eqref{ZP11}
together with   \cite[(4-8)]{MP} and~\eqref{ILWx5}, we obtain
\begin{align}
\begin{split}
\|  J^s \wt v \|_{L^p_T L^q_x}
& \les \| \wt v(0)\|_{L^2}
+ \Big(  \|w_2\|_{Y^{s, \frac 12}(T)} +   \|v_1 \|_{L^\infty_T H^s_x}
+ \eps  \| v_2 \|_{L^\infty_T H^{s}_x}
+ c_1 \Big)\|\wt v \|_{L^\infty_T L^2_x}\\
& \quad + \eps \|\wt v\|_{L^\infty_T H^s_x}
+ \|  \wt w  \|_{Y^{s, \frac 12}(T)}
\end{split}
\label{ZP12}
\end{align}

\noi
for  $0 \le s \le \frac 12$, 
$(p, q) = (\infty, 2)$ or $(4, 4)$, 
and 
 any $0 < T\le T_*$, 
 where $c_1$ is as in \eqref{ZP6}.

By collecting 
\eqref{ZP1}, 
\eqref{ZP2}, 
\eqref{ZP4}, 
and
\eqref{ZP12} for $s = 0$ with \eqref{WP8},  
we first obtain  
\begin{align}
\begin{split}
\| \wt v \|_{L^\infty_T L^2_x}
+ \|  \wt v \|_{L^4_{T,x}}
+ \| \wt w\|_{Y^{0, \frac 12}(T)}
+ \| \wt v \|_{X^{-1, 1}(T)}
& \les \| \wt v(0)\|_{L^2}
\end{split}
\label{ZP12a}
\end{align}

\noi
for any $0 < T \le T_*$, 
provided that 
we choose 
(i) $\eps_0 = \eps_0(\min(\dl, 1))> 0$ in \eqref{WP8}
sufficiently small such that 
\begin{align}
\eps_0 \dl^{-1} (1+ \dl^{-1}) \ll 1
\label{ZP12b}
\end{align}

\noi
and 
(ii)~$c_0 > 0$ in \eqref{ILWx3} and \eqref{ZP2}
and $c_1> 0$  in \eqref{ZP6}
sufficiently small, 
which can be guaranteed by 
choosing
 $T_* = T_*(\min(\dl, 1)) > 0$  sufficiently small.
Finally, from 
\eqref{ZP1}, 
\eqref{ZP2}, 
\eqref{ZP4}, 
and
\eqref{ZP12} with \eqref{ZP12a}, 
we conclude that, with $M$ as in \eqref{WP8}, 
\begin{align}
\begin{split}
\|  J^s \wt v \|_{L^\infty_T L^2_x}
+ \|  J^s \wt v \|_{L^4_{T,x}}
+ \| \wt w\|_{Y^{s, \frac 12}(T)}
+ \| \wt v \|_{X^{-1, 1}(T)}
& \le C(M) \| \wt v(0)\|_{H^s}
\end{split}
\label{ZP13}
\end{align}

\noi
for any $0 < T \le T_*$, 
provided that  $\eps_0, c_0 > 0$
are sufficiently small as above.

\subsection{Well-posedness}
\label{SUBSEC:LWP3}

Once we have \eqref{ZP13}, 
local well-posedness of 
the renormalized ILW \eqref{ILW2}
(and hence of the original ILW equation \eqref{ILW1})
follows
from a standard argument.
Given $0 \le s \le \frac 12$, 
let $u_0 \in H^s_0(\T)$
with $2 \| u_0\|_{L^2} \le \eps_0 = \eps_0(\min(\dl, 1))$
and consider the sequence
$\{u_{0, k}\}_{k \in \N}$ of smooth mean-zero initial data
$u_{0, k} = \Pi_{\le k} u_0$, 
where $\Pi_{\le k}$
is the frequency projector onto the (spatial) frequencies $\{n \in \Z: |n| \le k\}$.
Let $v_k$ be the smooth global solution 
to \eqref{ILW2}
with $v_k|_{t = 0} = u_{0, k}$.
Then, it follows from \eqref{ZP13}
that $v_k$ converges to a limit $v$
in $C([0,T_*]; H^s(\T)) \cap 
L^4([0,T_*]; W^{s, 4}(\T))$
and that the associated gauged function $w_k = w_k(v_k)$
(defined as in \eqref{gauge6})
converges to a limit $w$ in $Y^{s, \frac 12}(T_*)$, 
where 
 $T_* = T_*(\min(\dl, 1)) > 0$  
 is as in the previous subsections.
It is easy to verify that the limit $v$
satisfies the equation~\eqref{ILW2}
with $v|_{t = 0} = u_0$
and that the limits $v$ and $w$ satisfy~\eqref{gauge6}.
In the current periodic setting, 
uniqueness of solutions and local Lipschitz continuity
of the solution map for the renormalized ILW \eqref{ILW2} follow from~\eqref{ZP13}.
Furthermore, 
by noting that the local existence time 
$T_* = T_*(\min(\dl, 1)) > 0$
depends only on $\dl > 0$
(as long as we choose $\eps_0$ (= the upper bound on the $L^2$-norm 
of initial data) sufficiently small such that 
\eqref{ZP12b} is satisfied), 
we conclude global well-posedness
of \eqref{ILW2} from 
the conservation of the $L^2$-norm.
Finally, 
by undoing the Galilean transform \eqref{gauge0}, 
we obtain global well-posedness of the original ILW equation \eqref{ILW1}.
This concludes the proof of Theorem \ref{THM:1}.

\begin{remark}
\label{REM:scaling} \rm

In this section, we proved local and global well-posedness
 of the renormalized ILW \eqref{ILW2}
 (and then of ILW \eqref{ILW1})
with the small $L^2$-norm assumption.
In the general case, 
we apply a scaling argument and reduce the situation 
to the small $L^2$-norm case as in \cite{CKSTT, Moli1, Moli2, MP}.
Given $\ld \ge 1$, let $\S_\ld$ be
the  $\dot H^{-\frac 12}$-invariant scaling operator defined by 
\begin{align}
\S_\ld(u) (t, x) = \ld^{-1} u (\ld^{-2}t, \ld^{-1} x).
\label{S1}
\end{align}

\noi
Recall 
that the BO equation \eqref{BO1} on $\R$
in invariant under the action of $\S_\ld$.
While the renormalized ILW \eqref{ILW2} on $\R$ does not 
enjoy any  scaling symmetry, 
a direct computation shows that if $v$ is a solution 
to \eqref{ILW2} on $\R$ with
 the depth parameter $\dl$, 
then $v_\ld = \S_\ld(v)$
is a solution to 
\eqref{ILW2}  on $\R$ with the depth parameter $\ld \dl$:
\begin{align}
\dt v_\ld -
 \mathcal{T}_{\ld \dl} \dx^2 v_\ld =  \dx (v_\ld^2) .
\label{ILW2x}
\end{align}

\noi
Namely, {\it the family of the renormalized ILW equations
with depth parameters $0 < \dl < \infty$ remains invariant
under the action of $\S_\ld$}.
In the periodic setting, 
if
$v$ satisfies  \eqref{ILW2} on $\T$
then $v_\ld = \S_\ld(v)$
satisfies \eqref{ILW2x}
on the dilated torus 
$\T_\ld = \R/(2\pi \ld \Z)$.
With $u_0 = v(0)$, 
the scaled initial data for $v_\ld$ satisfying \eqref{ILW2x} is given by 
$u_{0, \ld}(x) = \ld^{-1} u_0 ( \ld^{-1} x)$.
Noting that 
\begin{align*}
\| u_{0, \ld}\|_{L^2(\T_\ld)} = \ld^{-\frac 12} \|u_0\|_{L^2(\T)}, 
\end{align*}

\noi
we can choose $\ld \gg 1$ such that 
$\| u_{0, \ld}\|_{L^2(\T_\ld)} \le \eps_0 = \eps_0(\min(\dl, 1))$.
We point out that 
we can choose 
$\eps_0(\min(\dl, 1))$
independently of  $\ld \ge1$
(since $\ld \dl \ge \dl$).
As pointed out in \cite[Section 7]{MP},
all the estimates hold true 
on $\T_\ld$ with constants independent of $\ld \ge 1$.
Here, the most important (and well-known) point
is that the periodic $L^4$-Strichartz
estimate (Lemma~\ref{LEM:L4}) holds
true
on $\T_\ld$ with a constant independent of $\ld \ge 1$.
With this observation, 
we can repeat the argument presented in this section
and prove global well-posedness of \eqref{ILW2x}
in $H^s(\T_\ld)$ with the small  $L^2$-norm assumption.
Then, by undoing the scaling, we conclude
global well-posedness of \eqref{ILW2} (and hence of \eqref{ILW1})
in $H^s(\T)$ without the small  $L^2$-norm assumption.

\end{remark}

\begin{remark}\rm
The bound \eqref{ILWx5}, coming from  \eqref{diff1} and \eqref{diff2}, 
played a crucial role in the argument on $\T$ presented above, 
yielding local Lipschitz continuity of the solution map in the periodic case.
As pointed out in Remark \ref{REM:diff},  however, 
  \eqref{diff1} and \eqref{diff2} (and hence \eqref{ILWx5})  do not hold 
in the real line case.
This is the primary reason for the failure of local uniform continuity
of the solution map for ILW on $\R$ constructed in Theorem \ref{THM:1}.
Nonetheless, under 
the extra assumption~\eqref{diff3}, 
one can still prove~\eqref{diff1} and \eqref{diff2} on~$\R$
(but  the argument uses the equation; see \cite[Lemma 4.1]{MP}
in the case of the BO equation), 
giving local Lipschitz continuity
of the solution map restricted to the class 
\eqref{diff3} (say, for fixed $v_1(0)$)
consisting of elements with the identical low-frequency parts.
See Section 4 in \cite{MP}
for a further discussion.

\end{remark}

\section{Deep-water limit of  ILW in $L^2$}
\label{SEC:conv}

In this section, 
by slightly modifying the argument in Section \ref{SEC:LWP}, 
we present the proof of Theorem \ref{THM:2}.
As in the previous sections, we only consider the periodic case.
Moreover, we only consider the case with sufficiently small $L^2$-norms.

\medskip

\noi
$\bullet$ {\bf Part  1:}
In this part, we consider the mean-zero case.
Namely, 
given $0 \le s \le \frac 12$, fix a mean-zero function  $u_0 \in H^s_0(\T)$
and 
let  $\{u_{0, \dl}\}_{1 \le \dl <  \infty}$
be a net in $H^s_0(\T)$
(of mean-zero functions)
such that 
$u_{0, \dl}$ converges to $u_0$ in $H^s(\T)$ as $\dl \to \infty$.

We first establish convergence of solutions to the renormalized ILW \eqref{ILW2}
to the associated solution to the BO equation \eqref{BO1}.
Let $u$ be the global solution to the BO equation \eqref{BO1}
with $u|_{t = 0} = u_0$
constructed in \cite{MP}, 
and 
 let $v_\dl$,  $1 \le \dl < \infty$, be the global solution to \eqref{ILW2} with 
$v_\dl|_{t = 0} = u_{0, \dl}$
constructed in Theorem \ref{THM:1}.
Recalling that $\mathcal{Q}_\infty = 0$, 
the renormalized ILW~\eqref{ILW2} formally reduces to the BO equation \eqref{BO1}
when $\dl = \infty$.
In view of this, we set $v_\infty = u$ and $u_{0, \infty} = u_0$.
For each $1 \le \dl \le \infty$, 
let  $F_\dl$ be the mean-zero primitive of $v_\dl$
as defined in \eqref{gauge1}, 
and 
let 
$W_\dl$ and $w_\dl$ be the associated gauged functions 
as in \eqref{gauge4} and \eqref{gauge6}.

Let $N^s_T$ be as in \eqref{WP1}.
Then, the a priori bound 
\eqref{WP5}
holds uniformly in  $1 \le \dl \le \infty$ (including $\dl = \infty$): 
\begin{align}
N^s_T(v_\dl) 
& \les \big( 1 + \|u_{0, \dl}\|_{L^2}\big)\|u_{0, \dl}\|_{H^s}
+ 
N^0_T(v_\dl)\big(1+ N^0_T(v_\dl)\big)
N^s_T(v_\dl)
\label{XP1}
\end{align}

\noi
for any $0 < T \le T_*$, 
where $T_* > 0$ is now independent of  $1 \le \dl \le \infty$.
By assuming that 
$\|u_{0, \infty}\|_{L^2} = \|u_0\|_{L^2} \le \eps$
for sufficiently small $\eps > 0$, 
it follows from 
the convergence of $u_{0, \dl}$ to $u_{0, \infty}$ in $H^s(\T)$
that there exists
$\dl_0 = \dl_0(\eps) \ge 1$ such that 
$\|u_{0, \dl}\|_{L^2} \les \eps$
for any $\dl \ge \dl_0$.
Then, 
from \eqref{XP1}, we have 
\begin{align}
N^0_{T_*}(v_\dl) 
 \les  \eps
\qquad \text{and}\qquad
N^s_{T_*}(v_\dl) 
& \les \|u_{0, \dl}\|_{H^s}\le M 
\label{XP3}
\end{align}

\noi
for some $M > 0$, 
uniformly in $\dl_0 \le \dl \le \infty$.

Let $\wt v_\dl = v_\dl - v_\infty$, 
$\wt W_\dl = W_\dl - W_\infty$, and $\wt w_\dl = w_\dl - w_\infty$.
The difference 
$\wt w_\dl$ satisfies 
\begin{align}
\begin{split}
\dt &  \wt w_\dl - \H \dx^2 \wt w_\dl \\
& = - 2 \dx \P_{+, \hi}(W_\dl \P_- \dx \wt v_\dl)
- 2 \dx \P_{+, \hi}(\wt W_\dl \P_- \dx v_\infty)\\
& \quad 
- 2 \dx \P_{+, \hi}(\P_\lo e^{iF_\dl} \P_- \dx \wt v_\dl) 
- 2 \dx \P_{+, \hi}\big(\P_\lo (e^{iF_\dl} - e^{iF_\infty} )\P_- \dx v_\infty\big) \\
& \quad 
+ i \dx\P_{+, \hi}(e^{iF_\dl}  \Qdl v_\dl) 
+ 0 \\
& \quad 
- i  \P_0( v_\dl^2) \wt w_\dl
- i  \P_0( v_\dl^2 - v_\infty^2 ) w_\infty.
\end{split}
\label{XP4}
\end{align}

\noi
Compare this with \eqref{ILWx1}, 
where the difference appears only 
in the fifth term and the sixth term (which is $0$ in \eqref{XP4}).
In estimating  the fifth term
on the right-hand side of  \eqref{XP4}, we instead use \eqref{MP2f}.
Hence, from the discussion in Subsection \ref{SUBSEC:LWP2}
leading to \eqref{ZP2}
with \eqref{XP3}, we have
\begin{align}
\begin{split}
\| \wt w_\dl\|_{Y^{s, \frac 12}(T_*)}
& \le C \| \wt w_\dl (0)\|_{H^s}\\
& \quad + C\big(\eps + \|w_\dl\|_{X^{s, \frac 12}(T_*)}\big)
\Big(\| \wt v_\dl \|_{L^\infty_{T_*} L^2_x} + \|  \wt v_\dl \|_{L^4_{T_*, x}} 
+ \| \wt v_\dl \|_{X^{-1, 1}(T_*)}\Big)
\\
&
\quad + 
C \eps  \| w_\infty \|_{Y^{s, \frac 12}(T_*)}
\|\wt v_\dl \|_{L^\infty_{T_*}L^2_x}
+ 
T_*^\ta M \dl^{-1} ,  
\end{split}
\label{XP5}
\end{align}

\noi
uniformly in $\dl_0 \le \dl \le \infty$.
From \eqref{BO1} and \eqref{ILW3}, we have 
\begin{align}
\dt \wt v_\dl - \H \dx^2 \wt v_\dl = \Qdl\dx  v_\dl +  \dx \big((v_\dl + v_\infty) \wt v_\dl\big).
\label{XP6}
\end{align}

\noi
Then, by a slight modification of the proof of \eqref{MP1a} in 
Lemma \ref{LEM:MP1} with \eqref{XP3}, 
we have 
\begin{align}
\begin{split}
\| \wt v_\dl \|_{X^{-1, 1}(T_*)}
&  \les  \|\wt v_\dl\|_{L^\infty_{T_*} L^2_x}
+  \eps \|\wt v_\dl\|_{L^4_{T_*, x}}
+ \dl^{-1} \| v_\dl\|_{L^\infty_{T_*} L^2_x}\\
&  \les  \|\wt v_\dl\|_{L^\infty_{T_*} L^2_x}
+  \eps \|\wt v_\dl\|_{L^4_{T_*, x}}
+ \eps  \dl^{-1} , 
\end{split}
\label{XP7}
\end{align}

\noi
uniformly in $\dl_0 \le \dl \le \infty$.

Lastly,
note that 
 \eqref{XP6} is different from 
\eqref{ZP3} only on the first term on the right-hand side of \eqref{XP6}
(under the identification of $v_1$ and $v_2$
with $v_\dl$ and $v_\infty$, respectively).
Namely, 
 the bound  \eqref{ZP12}
holds except for 
$c_1 \|\wt v \|_{L^\infty_T L^2_x}$
in  \eqref{ZP12}, coming from \eqref{ZP5a}
and \eqref{ZP6}.
In this case, we instead apply
\eqref{MP1dd} with \eqref{XP3}
to estimate the contribution from the first term on the right-hand side
of~\eqref{XP6}.
This yields
\begin{align}
\begin{split}
\|  J^s \wt v_\dl \|_{L^p_{T_*} L^q_x}
& \les \| \wt v_\dl(0)\|_{L^2}
+ \Big( \|w_\infty\|_{Y^{s, \frac 12}(T_*)} +   \|v_\dl \|_{L^\infty_{T_*} H^s_x}
+ \eps  \| v_\infty \|_{L^\infty_{T_*} H^{s}_x}\Big)\|\wt v_\dl \|_{L^\infty_{T_*} L^2_x}\\
& \quad + \eps \|\wt v_\dl\|_{L^\infty_{T_*} H^s_x}
+ \|  \wt w_\dl  \|_{Y^{s, \frac 12}(T_*)}
+ \eps \dl^{-1}
\end{split}
\label{XP8}
\end{align}

\noi
for  $0 \le s \le \frac 12$
and 
$(p, q) = (\infty, 2)$ or $(4, 4)$, 
uniformly in $\dl_0 \le \dl \le \infty$, 
where the last term represents
the contribution from the first term on the right-hand side
of \eqref{XP6}.

Therefore, putting \eqref{XP5}, \eqref{XP7}, and \eqref{XP8}
together
with \eqref{ZP1} (for $\wt w_\dl(0)$ and $\wt v_\dl(0)$)
and proceeding as in Subsection \ref{SUBSEC:LWP2}
(first for $s= 0$ as in \eqref{ZP12a} and then for $s$ as in \eqref{ZP13}),
we obtain
\begin{align}
\begin{split}
& \|  J^s \wt v_\dl \|_{L^\infty_{T_*} L^2_x}
+ \|  J^s \wt v \|_{L^4_{T_*,x}}
+ \| \wt w_\dl\|_{Y^{s, \frac 12}(T_*)}
+ \| \wt v_\dl \|_{X^{-1, 1}(T_*)}\\
& \quad \le C(M) \| \wt v_\dl(0)\|_{H^s} +  C(M) \dl^{-1}, 
\end{split}
\label{XP9}
\end{align}

\noi
uniformly in $\dl_0 \le \dl \le \infty$, 
provided that  $T_*  > 0$  sufficiently small.
(Recall that 
$T_* > 0$ is independent of  $1 \le \dl \le \infty$.)
Here, the last term on the right-hand side of \eqref{XP9}
comes from the last term in~\eqref{XP7}.
Since the right-hand side of \eqref{XP9} tends to~0 as $\dl \to \infty$, 
we conclude that, as $\dl \to \infty$,  
the solution $v_\dl$ to \eqref{ILW2}
converges to the solution $u = v_\infty$
to \eqref{BO1}
in $C([0, T_*]; H^s(\T))\cap L^4([0, T_*]; W^{s, 4}(\T))$.
Given any $T_0 \gg 1$, 
we can  iterate this local-in-time convergence argument
and  conclude  convergence on the entire interval $[0, T_0]$.

Next, we transfer this convergence back to the original ILW 
equation \eqref{ILW1}.
Let $u_\dl$ be the global solution to  \eqref{ILW1}
with $u_\dl|_{t = 0} = u_{0, \dl}$.
Then, from \eqref{gauge0}, we have
$u_\dl = \tau_{-\dl^{-1}} v_\dl
= \tau(-\dl^{-1},  v_\dl).$ 
Letting $u$ be the global solution to the BO equation \eqref{BO1}
with $u|_{t = 0} = u_0$ as above, 
it follows from  the triangle inequality that
\begin{align}
\begin{split}
\| u - u_\dl\|_{C_{T_0} H^s_x}
& \le \| u - \tau(-\dl^{-1}, u)\|_{C_{T_0} H^s_x}
+ 
\| \tau(-\dl^{-1}, u) - u_\dl\|_{C_{T_0} H^s_x}\\
& \le \| u - \tau(-\dl^{-1}, u)\|_{C_{T_0} H^s_x}
+ 
\| u - v_\dl\|_{C_{T_0} H^s_x}.
\end{split}
\label{XP11}
\end{align}

\noi
By noting that $u = \tau(0, u)$, 
it follows from Lemma \ref{LEM:trans}
that the first term on the right-hand side of \eqref{XP11}
tends to $0$ as $\dl \to \infty$, 
while the second term tends to $0$ as $\dl \to \infty$
in view of the discussion above.
This proves
convergence of $u_\dl$
to $u$ in $C([0, T_0]; H^s(\T))$
for any $T_0 > 0$.

\medskip

\noi
$\bullet$ {\bf Part  2:}
In this step, we consider the general case without the mean-zero assumption.
Given    $u_0 \in H^s(\T)$, 
let  $\{u_{0, \dl}\}_{1 \le \dl <  \infty}$
be a net in $H^s(\T)$
such that 
$u_{0, \dl}$ converges to $u_0$ in $H^s(\T)$ as $\dl \to \infty$.
Note that  the spatial mean $\mu(u_{0, \dl})$ of $u_{0, \dl}$
converges to the spatial mean $\mu(u_0)$ of $u_0$ as $\dl \to \infty$.

Let $u$ be the global solution to the BO equation \eqref{BO1}
with $u|_{t = 0} = u_0$
constructed in~\cite{MP}, 
and 
let $u_\dl$,  $1 \le \dl < \infty$, be the global solution to \eqref{ILW1} with 
$u_\dl|_{t = 0} = u_{0, \dl}$
constructed in Theorem \ref{THM:1}.
Then, by setting
$\wt u = \G_{u_0}(u)$ and $\wt u_\dl = \G_{u_{0, \dl}}(u_\dl)$, 
where $\G$ is the  Galilean transform defined in \eqref{gaugex}, 
we see that 
 $\wt u$ (and $\wt u_\dl$, respectively) is  the global solution to  \eqref{BO1}
with the mean-zero initial data $\wt u |_{t = 0} = \wt u_0 = u_0 - \mu(u_0)$
(and  to \eqref{ILW1}
with the mean-zero initial data $\wt u_\dl |_{t = 0} = \wt u_{0, \dl} = u_{0, \dl} - \mu(u_{0, \dl})$, 
respectively).
Hence, from Part 1, 
we see that, given any $T_0 > 0$,  $\wt u_\dl$ converges to $\wt u$
in $C([0, T_0]; H^s(\T))$
as $\dl \to \infty$.

With $\tau_h$ as in 
\eqref{trans1}, we have
\begin{align*}
u = \tau (2\mu(u_0), \wt u) + \mu(u_0)\qquad \text{and}
\qquad u_\dl = \tau (2\mu(u_{0, \dl}), \wt u_\dl) + \mu(u_{0, \dl}).
\end{align*}

\noi
Therefore, in view of the convergence of 
$\mu(u_{0, \dl})$ and $\wt u_\dl$  to $\mu(u_0)$ and $\wt u$, respectively, 
we conclude from the joint continuity of $\tau$
(Lemma \ref{LEM:trans}) that 
 $ u_\dl$ converges to $ u$
in $C([0, T_0]; H^s(\T))$
as $\dl \to \infty$.
This concludes the proof of Theorem \ref{THM:2}.

\begin{ackno}\rm
This material is based upon work supported by the Swedish
Research Council under grant no.~2016-06596
while 
the first, third, and fourth authors were in residence at 
Institut  Mittag-Leffler in Djursholm, Sweden
during the 
program ``Order and Randomness in Partial Differential Equations''
in Fall, 2023.
A.C., G.L., and T.O.~were supported by the European Research Council
(grant no. 864138 ``SingStochDispDyn'').
G.L.~was also supported by the EPSRC New Investigator Award (grant no.~EP/S033157/1).
D.P. was supported by a Trond Mohn Foundation grant.
\end{ackno}

\end{document}